\numberwithin{equation}{section}
\newtheorem{theorem}{Theorem}[section]
\newtheorem{proposition}[theorem]{Proposition}
\newtheorem{lemma}[theorem]{Lemma}
\theoremstyle{definition}
\def\XXint#1#2#3{{\setbox0=\hbox{$#1{#2#3}{\int}$}
		\vcenter{\hbox{$#2#3$}}\kern-.5\wd0}}
\def\B{\mathbb{R}^n}
\def\R{\mathbb{R}_+^{n+1}}
\def\ou{\overline{u}}
\def\l{\lambda}
\def\s{(-\Delta)^s}
\def\ss{(-\Delta)^{\frac{s}{2}}}
\def\S{\mathbb{S}^{n-1}}
\def\e{\varepsilon}
\def\D{\Delta}
\def\vp{\varphi}
\def\RR{\mathbb{R}}
\def\ve{\varepsilon}
\begin{document}
	\title[Fractional H\'{e}non-Gelfand-Liouville]{On stable and finite Morse index solutions of the nonlocal H\'{e}non-Gelfand-Liouville equation}

	\author[M. Fazly]{Mostafa Fazly}
	\address{\noindent Mostafa Fazly, Department of Mathematics, The University of Texas at San Antonio, One UTSA Circle, San Antonio, TX, 78249}
	\email{mostafa.fazly@utsa.edu}
	
	\author[Y. Hu]{Yeyao Hu}
	\address{\noindent Yeyao Hu, School of Mathematics and Statistics, The Central South University, Changsha, Hunan 410083, P. R. China}
	\email{huyeyao@gmail.com}	
	
	\author[W. Yang]{ Wen Yang}
	\address{\noindent Wen ~Yang,~Wuhan Institute of Physics and Mathematics, Chinese Academy of Sciences, P.O. Box 71010, Wuhan 430071, P. R. China; Innovation Academy for Precision Measurement Science and Technology, Chinese Academy of Sciences, Wuhan 430071, P. R. China.}
	\email{wyang@wipm.ac.cn}

	\maketitle
	
	\begin{abstract}
		We consider the nonlocal  H\'{e}non-Gelfand-Liouville  problem
		\begin{equation*}
		(-\Delta)^s u = |x|^a e^u\quad\mathrm{in}\quad \B,
		\end{equation*} for every $s\in(0,1)$, $a>0$ and $n>2s$. We prove a monotonicity formula
		for solutions of the above equation using rescaling arguments. We apply this formula together with blow-down analysis arguments and technical integral estimates to establish non-existence of finite Morse index solutions when
		$$\dfrac{\Gamma(\frac n2)\Gamma(s)}{\Gamma(\frac{n-2s}{2})}\left(s+\frac a2\right)> \dfrac{\Gamma^2(\frac{n+2s}{4})}{\Gamma^2(\frac{n-2s}{4})}.$$
	\end{abstract}

\noindent
{\it \footnotesize 2010 Mathematics Subject Classification}: {\scriptsize 35B65, 35J60, 35B08, 35A15.}\\
{\it \footnotesize Key words:  H\'{e}non-Gelfand-Liouville equation, fractional Laplacian, stable solutions, monotonicity formula}. {\scriptsize }

	\section{Introduction}
		
	We study the following nonlocal H\'{e}non-Gelfand-Liouville equation 
		\begin{equation}
	\label{fg-1}
	(-\Delta)^s u = |x|^a e^u\quad\mathrm{in}\quad \B,
	\end{equation}
	when $s\in (0,1)$, $a>0$ and $n >2s$. For  $s\in(0,1)$,  the nonlocal fractional Laplacian operator $(-\D)^s $ is defined by
		\begin{equation*}
	\label{1.deff}
	\s u(x)=c_{n,s}~\mathrm{P.V.}\int_{\B}\frac{u(x)-u(y)}{|x-y|^{n+2s}}dy. 
	\end{equation*}
	Here,  P.V. stands for the principle value and $c_{n,s}$ is the normalizing constant
	$$c_{n,s}=\frac{2^{2s}}{\pi^{n/2}}\frac{\Gamma(\frac{n+2s}{2})}{|\Gamma(-s)|}.
	$$
	One can interpret equation \eqref{fg-1} in the following sense
	\begin{align}
	\label{weak-fg}
	\int_{\B}u(-\D)^s\vp dx=\int_{\B} |x|^a e^u\vp dx\quad \mbox{for every }~\vp\in C_c^\infty(\B),
	\end{align}	
	provided  that $u\in L_{s}(\B)$ and  $|x|^a e^u\in L^1_{\mathrm{loc}}(\B)$. Here the space $L_{\mu}(\B)$ for $\mu\geq-\frac{n}{2}$ is defined as
	$$L_\mu(\B):=\left\{u\in L^1_{\mathrm{loc}}(\B):\int_{\B}\frac{|u(x)|}{1+|x|^{n+2\mu}}dx<\infty  \right\}.$$
	Throughout the paper, we shall always assume $u\in L_s(\B)$  
	and $|x|^a e^u\in L^1_{\mathrm{loc}}(\B)$. Our interest is to classify the stable and finite Morse index solutions of \eqref{fg-1}. A solution $u$ to \eqref{fg-1} is said to be  stable in an open set  $\Omega\subseteq\B$ if
	\begin{equation}
	\label{1.stablecondition}
	\frac{c_{n,s}}{2}\int_{\B}\int_{\B}\frac{(\vp(x)-\vp(y))^2}{|x-y|^{n+2s}}dxdy  \geq\int_{\B}|x|^a e^u\vp^2 dx \quad \text{for every }\vp\in C_c^\infty(\Omega).
	\end{equation}
	While a solution is said to be of finite Morse index if it is stable  outside a compact set in $\B$.
	
	When $s=1,~a=0$, equation \eqref{fg-1} is reduced to the classical Liouville equation, where all the solutions with finite integral of the exponential nonlinearity in dimension two have been classified by Chen-Li in their celebrated paper \cite{cl}. They proved that up to a translation, all the solutions are radially symmetric in $\mathbb{R}^2$. The nonlocal counterpart of the Liouville equation when $2s=n=1$ is studied by Da Lio et al. in \cite{DMR}. Moreover, it has been shown that the Morse indices of these solutions are finite. In the past decade, the classification of stable and finite Morse index solutions of \eqref{fg-1} have been a very active and fruitful area in the field of nonlinear PDEs. When $a=0$, Farina in \cite{f2} and Dancer-Farina in \cite{df} established  non-existence of stable solutions to \eqref{fg-1} for $ n\leq 9$ and non-existence of finite Morse index solutions to \eqref{fg-1} for $3\leq n\leq 9$. When $a\neq 0$, Wang-Ye in \cite{wy} proved some Liouville-type theorems for weak solutions with finite Morse index. Another problem closely related to \eqref{fg-1} is the following Lane-Emden equation
	\begin{equation}
	\label{fgp-1}
	(-\Delta)^su=|x|^a|u|^{p-1}u\quad\mathrm{in}\quad \B.
	\end{equation}
	In \cite{f}, Farina obtained the optimal results for stable and finite Morse index solutions to equation (\ref{fgp-1}) when $s=1$ and $a=0$. Later on, D\'avila et al. initiated the study of equation (\ref{fgp-1}) in the nonlocal setting and they classified finite Morse index solutions of \eqref{fgp-1} when $a=0$ and $s\in(0,1)$. Later on, Wei and the first author studied higher-order nonlocal Lane-Emden equation and nonlocal H\'enon-Lane-Emden equation in \cite{fw} and \cite{fw1} respectively. Particularly, they proved a Liouville theorem for finite Morse index solutions of the nonlocal H\'{e}non-Lane-Emden equation for Sobolev subcritical exponents $1<p<p_{S}(n,a)$ and for Sobolev supercritical exponents $p_{S}(n,a)<p$ such that
	\begin{equation*} 
	\label{cond}
	p \frac{\Gamma(\frac n2-\frac{s+\frac{a}{2}}{p-1}) \Gamma(s+\frac{s+\frac{a}{2}}{p-1})}{\Gamma(\frac{s+\frac{a}{2}}{p-1}) \Gamma(\frac{n-2s}{2} - \frac{s+\frac{a}{2}}{p-1})} > \frac{\Gamma(\frac{n+2s}{4})^2}{\Gamma(\frac{n-2s}{4})^2}.
	\end{equation*}
	The above is known as the Joseph-Lundgren exponent, see \cite{jl}. Here, the Sobolev critical exponent is given by
	\begin{equation*}
	p_{S}(n,a)=\left\{
	\begin{aligned}
	&+\infty&\quad\text{if $n\le 2s$}   , \\
	&\frac{n+2s+2a}{n-2s}&\quad\text{if $n> 2s$}.
	\end{aligned}
	\right.
	\end{equation*}
	In the absence of stability notion, such classifications for Sobolev subcritical exponent were proposed by Gidas-Spruck	in the seminal work \cite{gs}, see also \cite{ps}. The literature in the context of classifying the stable solutions and finite Morse index solutions is too vast to list. For the cosmic string equation, Liouville equation, Lane-Emden equations and systems, we refer readers to \cite{ay,c1,c3,df,dggw,ddw, dgw,dyg, ddww,f,f2,fw,hx,wy,w1} and references therein.
		
It  is known, see e.g. \cite[Proposition 3.2]{r1}, that the function
\begin{equation*}
\label{1.singular}
u_{n,s}(x):=-(2s+a)\log|x|+\log \lambda_{n,s} \ \text{for}\ \lambda_{n,s}:=2^{2s}\dfrac{\Gamma(\frac n2)\Gamma(s)}{\Gamma(\frac{n-2s}{2})}\left(s+\frac a2\right),
\end{equation*}
is a singular solution to \eqref{fg-1}. Based on the following  Hardy's inequality, see \cite[Theorem 2.9]{y} and \cite{h},
	\begin{equation*}
	\frac{c_{n,s}}{2}\int_{\B}\int_{\B}\dfrac{(\psi(x)-\psi(y))^2}{|x-y|^{n+2s}}dxdy\geq\Lambda_{n,s}\int_{\B}|x|^{-2s}\psi^2dx,\quad \forall\psi\in C_c^{\infty}(\B),
	\end{equation*}
	with
    \begin{equation*}
	\label{1.hconst}
	\Lambda_{n,s}=2^{2s}\dfrac{\Gamma^2(\frac{n+2s}{4})}{\Gamma^2(\frac{n-2s}{4})}. 
	\end{equation*}
We observe that the singular solution  $u_{n,s}$ is stable if and only if
	\begin{equation}
	\label{1.stable}
	\dfrac{\Gamma(\frac n2)\Gamma(s)}{\Gamma(\frac{n-2s}{2})}\left(s+\frac a2\right)\leq \dfrac{\Gamma^2(\frac{n+2s}{4})}{\Gamma^2(\frac{n-2s}{4})},
	\end{equation}
where we use the fact that $|x|^ae^{u_{n,s}}$ is precisely the weight in Hardy's inequality. The stability condition \eqref{1.stable} for the solution $u_{n,s}$ suggests that equation \eqref{fg-1} might not admit any stable solution if the following inequality holds
	\begin{equation}
	\label{1.stable1}
	\dfrac{\Gamma(\frac n2)\Gamma(s)}{\Gamma(\frac{n-2s}{2})}\left(s+\frac a2\right)> \dfrac{\Gamma^2(\frac{n+2s}{4})}{\Gamma^2(\frac{n-2s}{4})}.
	\end{equation}
The aim of this paper is to confirm the above speculation. Our approach is based on studying the extended function $\ou$ of $u$ in the upper-half space $\R$, which is discovered by Caffarelli and Silvestre in \cite{cs}.  Let $\ou$ be the extended function of $u$ given by the Poisson formula
	\begin{equation*}
	\overline{u}(X)=\int_{\B}P(X,y)u(y)dy,\quad X=(x,t)\in \B\times (0,\infty),
	\end{equation*}
	where
	\begin{equation}
	\label{poisson}
	P(X,y)=d_{n,s}\dfrac{t^{2s}}{|(x-y,t)|^{n+2s}},
	\end{equation}
	and $d_{n,s}>0$ is a normalizing constant so that   $\int_{\B}P(X,y)dy=1$. Notice that $\ou$ is well-defined as $u\in L_s(\B)$.  Next, we define the space $\dot H^s(\Omega)$ by $$\dot H^s(\Omega):=\left\{ u\in L^2_{\mathrm{loc}}(\Omega): \int_{\Omega}\int_{\Omega}\frac{|u(x)-u(y)|^2}{|x-y|^{n+2s}}dxdy<\infty\right\}.$$
	The notation $\dot H^s_{\mathrm{loc}}(\B)$ stands for the set of  all functions  in $\dot H^s(\Omega)$ for every bounded open set $\Omega\subset\B$. It is straightforward to verify that $t^\frac{1-2s}{2}\nabla \ou\in L^{2}_{\mathrm{loc}}(\Omega\times[0,\infty))$ whenever $u\in \dot H^s(\Omega)$. Then, equation  \eqref{weak-fg} in terms of $\ou$  reads
	\begin{equation}
	\label{1.weak}
	\int_{\R}t^{1-2s}\nabla\ou\cdot\nabla\Phi(x,t)dxdt=\kappa_s\int_{\B}|x|^a e^u\vp  dx,
	\end{equation}
	for every $\Phi\in C_c^\infty(\R)$ where $\vp (x)=\Phi(x,0)$ and $\kappa_s=\frac{\Gamma(1-s)}{2^{2s-1}\Gamma(s)}.$
		
	Here is our main result concerning the non-existence of finite Morse index solution.
	
	\begin{theorem} \label{th1.1} 	
	Assume that $n>2s$ and $s\in (0,1)$. If
	\begin{equation*}
	\dfrac{\Gamma(\frac n2)\Gamma(s)}{\Gamma(\frac{n-2s}{2})}\left(s+\frac a2\right)> \dfrac{\Gamma^2(\frac{n+2s}{4})}{\Gamma^2(\frac{n-2s}{4})},
	\end{equation*}
	 holds then \eqref{fg-1} does not admit any finite Morse index solution $u\in L_s(\B)\cap W^{1,2}_{\mathrm{loc}}(\B)$ satisfying  $|x|^a e^u\in L_{\mathrm{loc}}^2(\B)$.
	\end{theorem}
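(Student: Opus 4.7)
The plan is to adapt the extension-based blow-down strategy developed for the nonlocal H\'enon--Lane--Emden equation to the exponential nonlinearity. I work with the Caffarelli--Silvestre extension $\overline u$ on $\mathbb R^{n+1}_+$, exploit the scale invariance $u^\lambda(x):=u(\lambda x)+(2s+a)\log\lambda$, and combine three ingredients: (i) the monotonicity formula announced in the abstract, (ii) integral estimates produced by the stability inequality \eqref{1.stablecondition}, and (iii) a classification of homogeneous stable solutions via the Hardy inequality with sharp constant $\Lambda_{n,s}$.

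First, a direct computation verifies that $u^\lambda$ solves \eqref{fg-1} for every $\lambda>0$, and that the extension $\overline u^\lambda(X)=\overline u(\lambda X)+(2s+a)\log\lambda$ inherits stability outside a ball whose radius tends to zero as $\lambda\to\infty$, thanks to the finite Morse index hypothesis. Next, I insert Moser-type test functions of the form $\eta\,e^{\alpha u/2}$ into \eqref{1.stablecondition} and combine them with the equation \eqref{weak-fg} to produce scale-invariant integral estimates on $|x|^a e^{(1+\alpha)u}$ over dyadic annuli, valid for $\alpha$ in an interval $(0,\alpha_\ast)$. The crucial observation is that strict inequality in \eqref{1.stable1} is exactly what guarantees $\alpha_\ast>0$; together with \eqref{1.weak}, these estimates yield uniform control of $t^{1-2s}|\nabla\overline u^\lambda|^2$ on compact subsets of $\overline{\mathbb R^{n+1}_+}\setminus\{0\}$.

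With these a priori bounds in hand, I extract a weak subsequential limit $\overline u^\infty$ of the family $\{\overline u^{\lambda_k}\}$ as $\lambda_k\to\infty$. Since the monotonicity functional is scale invariant, bounded, and nondecreasing along $\lambda\mapsto\overline u^\lambda$, it admits a finite limit, which forces $\overline u^\infty$ to be scale invariant; consequently $u^\infty(x)=-(2s+a)\log|x|+\psi(x/|x|)$ for some angular profile $\psi$. Passing \eqref{fg-1} and \eqref{1.stablecondition} to the limit shows that $u^\infty$ is a stable weak solution on $\mathbb R^n\setminus\{0\}$. A symmetry/uniqueness argument identifies $\psi$ with the constant $\log\lambda_{n,s}$, so $u^\infty=u_{n,s}$; but the Hardy inequality with sharp constant $\Lambda_{n,s}$ forces the singular solution $u_{n,s}$ to be stable if and only if
\begin{equation*}
\dfrac{\Gamma(\frac n2)\Gamma(s)}{\Gamma(\frac{n-2s}{2})}\left(s+\tfrac{a}{2}\right) \;\leq\; \dfrac{\Gamma^2(\frac{n+2s}{4})}{\Gamma^2(\frac{n-2s}{4})},
\end{equation*}
directly contradicting the hypothesis \eqref{1.stable1}.

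The main obstacle is the rigorous execution of the blow-down. One must (a) rule out the degenerate scenario in which $u^{\lambda_k}\to-\infty$ on compact subsets of $\mathbb R^n\setminus\{0\}$, using a matching lower bound coming from the monotonicity identity together with the integral estimate above; (b) pass both \eqref{1.weak} and \eqref{1.stablecondition} through weak convergence with the weight $|x|^a$ and the degenerate weight $t^{1-2s}$, which requires compactness in weighted Sobolev spaces up to the boundary $\{t=0\}$; and (c) justify the Moser-type test-function computation in the nonlocal setting, where one cannot integrate by parts pointwise and must instead handle the double-integral interaction between $(-\Delta)^s$ and $e^{\alpha u/2}$. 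Of these, item (b) is the most delicate, since the regularity assumption $|x|^ae^u\in L^2_{\mathrm{loc}}(\mathbb R^n)$ must be propagated through the rescaling to justify the limit passage in both the equation and the quadratic form.
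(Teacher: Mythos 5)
Your proposal follows the right overall template (extension, scaling, monotonicity, blow-down to a homogeneous limit, contradiction with a Hardy-type threshold), but there are two concrete gaps where the reasoning as written would not close.

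First, you assert that ``strict inequality in \eqref{1.stable1} is exactly what guarantees $\alpha_\ast>0$'' in the Moser iteration. This is a misconception about where the threshold \eqref{1.stable1} enters. The paper's iteration scheme (Proposition \ref{prop-2.6}, via Lemma \ref{lem-0.2}) yields $|x|^ae^u\in L^p_{\mathrm{loc}}$ away from the origin for all $p\in[1,\min\{5,1+\frac{n}{2s}\})$ using only the stability inequality outside a compact set, with no reference to whether \eqref{1.stable1} holds. The hypothesis \eqref{1.stable1} is used only at the very last step, in the classification of homogeneous stable profiles. Moreover, you gloss over the real obstruction the H\'enon weight creates in the iteration: to turn the stability inequality into an estimate for $|x|^{pa}e^{p\ou}$ on the extension, you have to absorb $|x|^a$ into the exponential and push it through the Poisson kernel via Jensen's inequality, which requires the inequality $\int P(X,y)\log|y|\,dy\ge\log|x|$ (the paper's Lemma \ref{poissonlog}). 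Without that device, the Jensen step produces $e^{p\int P(X,y)\,a\log|y|\,dy}$, not $|x|^{pa}$, and the estimate in \eqref{weighted} does not come out. Your sketch does not address this.

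Second, and more seriously, your argument reduces the contradiction to the singular solution $u_{n,s}$ by claiming that ``a symmetry/uniqueness argument identifies $\psi$ with the constant $\log\lambda_{n,s}$.'' No such identification is available, and the paper does not attempt it. The blow-down limit is only known to be a stable homogeneous solution of the form $\tau(\theta)-(2s+a)\log r$ with an a priori arbitrary, possibly non-constant, angular profile $\tau$. The paper's Theorem \ref{th3.1} handles this directly: testing \eqref{weak-fg} with radial test functions yields the identity $\int_{\mathbb S^{n-1}}e^{\tau(\theta)}\,d\theta=\frac{2s+a}{2s}A_{n,s}|\mathbb S^{n-1}|$, while testing the stability inequality with $\psi(x)=r^{-(n-2s)/2}\eta_\varepsilon(r)$ and extracting the $\log\frac1\varepsilon$ coefficient yields $\Lambda_{n,s}|\mathbb S^{n-1}|\ge\int_{\mathbb S^{n-1}}e^{\tau(\theta)}\,d\theta$. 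Combining the two gives $\Lambda_{n,s}\ge\frac{2s+a}{2s}A_{n,s}$, contradicting \eqref{1.stable1} without ever needing $\tau$ to be constant. As written, your step from ``$u^\infty$ is homogeneous and stable'' to ``$u^\infty=u_{n,s}$'' is an unsupported leap, and fixing it is not a routine matter; you need an argument that works for arbitrary $\tau$, which is precisely what the paper supplies.
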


Roughly speaking, the proof of Theorem \ref{th1.1} consists of two steps. First, we introduce the so-called monotonicity formula. Second, we need to express the solution of \eqref{fg-1} by the representation formula and apply it to derive some technical integral estimates to ensure that each term in the monotonicity formula is bounded. Now, let us give the monotonicity formula in the following theorem. 

   \begin{theorem}
	\label{th4.2}
Let $u\in L_{s}(\B)\cap W^{1,2}_{\mathrm{loc}}(\B)$ be a solution to  \eqref{fg-1}. Assume that    $|x|^a e^u\in L^2_{\mathrm{loc}}(\B)$. For $x_0\in\partial\R$ and $\l>0$, we define
	\begin{equation}
	\label{1.monotonicity}
		\begin{aligned}
		E(\ou,x_0,\l):=~&\lambda^{2s-n}\left(\frac12\int_{B^{n+1}(x_0,\l)\cap \R}t^{1-2s}|\nabla\ou|^2dxdt-\kappa_s\int_{B(x_0,\l)}|x|^a e^{\ou}dx\right)\\
		&+(2s+a)\l^{2s-n-1}\int_{\partial B^{n+1}(x_0,\l)\cap\R}t^{1-2s}\left(\ou+(2s+a)\log r\right)d\sigma.
		\end{aligned}
		\end{equation}
		Then,  $E$ is a nondecreasing function of $\l$. Furthermore,
		\begin{equation*}
		\frac{\partial E}{\partial\l}=\l^{2s-n}\int_{\partial B^{n+1}(x_0,\l)\cap\R}t^{1-2s}\left(\frac{\partial\ou}{\partial r}+\frac{2s+a}{r}\right)^2d\sigma,
		\end{equation*}
		where $B^{n+1}(x_0,\l)$ denotes the Euclidean ball in $\mathbb{R}^{n+1}$ centered at $x_0$ of radius $\l$, $\sigma$ is the $n$-dimensional Hausdorff measure on $\partial B^{n+1}(x_0,\l),$ $X=(x,t)\in\R$, $r=|(x-x_0,t)|$ and $\frac{\partial}{\partial r}=\nabla\cdot\frac{X-(x_0,0)}{r}$ is the corresponding radial derivative.
	\end{theorem}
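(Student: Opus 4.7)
The plan is to derive the monotonicity formula by exploiting the scaling symmetry of \eqref{fg-1}. For simplicity take $x_0=0$; the general case follows by an analogous direct computation. Observe that if $u$ solves \eqref{fg-1}, so does
\[ u^\l(x):=u(\l x)+(2s+a)\log\l, \]
because the shift by $(2s+a)\log\l$ compensates exactly for the H\'enon weight $|x|^a$ under the dilation. Set $\ou^\l(Y):=\ou(\l Y)+(2s+a)\log\l$ for $Y=(y,\tau)\in\R$. Then $\ou^\l$ satisfies $\mathrm{div}(\tau^{1-2s}\nabla\ou^\l)=0$ in $\R$ together with the trace relation $-\lim_{\tau\to 0^+}\tau^{1-2s}\partial_\tau\ou^\l=\kappa_s|y|^a e^{\ou^\l(\cdot,0)}$, in the weak sense of \eqref{1.weak} (one verifies this by a direct calculation tracking the factors of $\l$). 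A change of variables $X=\l Y$ in each of the three integrals of \eqref{1.monotonicity} then yields the key identity
\[ E(\ou,0,\l)=\frac{1}{2}\int_{B^{n+1}(0,1)\cap\R}\tau^{1-2s}|\nabla\ou^\l|^2\,dY-\kappa_s\int_{B(0,1)}|y|^a e^{\ou^\l}\,dy+(2s+a)\int_{\partial B^{n+1}(0,1)\cap\R}\tau^{1-2s}\ou^\l\,d\sigma, \]
so that $E$ depends on $\l$ only through $\ou^\l$ on the fixed unit half-ball.

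Now differentiate in $\l$. Integrating by parts in the weighted Dirichlet term, and using $\mathrm{div}(\tau^{1-2s}\nabla\ou^\l)=0$ in the interior together with the trace relation at $\{\tau=0\}$, produces a contribution $\kappa_s\int_{B(0,1)}|y|^a e^{\ou^\l}\partial_\l\ou^\l\,dy$ that cancels the derivative of the middle term exactly. What remains is the spherical piece
\[ \frac{dE}{d\l}=\int_{\partial B^{n+1}(0,1)\cap\R}\tau^{1-2s}\,\partial_\l\ou^\l\bigl[\partial_r\ou^\l+(2s+a)\bigr]\,d\sigma. \]
Differentiating the defining formula $\ou^\l(Y)=\ou(\l Y)+(2s+a)\log\l$ directly gives the scaling identity $\l\,\partial_\l\ou^\l=Y\cdot\nabla_Y\ou^\l+(2s+a)$, which on $|Y|=1$ reduces to $\l\,\partial_\l\ou^\l=\partial_r\ou^\l+(2s+a)$. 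Substituting back yields the perfect square
\[ \frac{dE}{d\l}=\frac{1}{\l}\int_{\partial B^{n+1}(0,1)\cap\R}\tau^{1-2s}\bigl[\partial_r\ou^\l+(2s+a)\bigr]^2\,d\sigma\ge 0, \]
and undoing the change of variables on the sphere (with $\tau=t/\l$, $d\sigma_Y=\l^{-n}d\sigma_X$, and $\partial_r\ou^\l(Y)|_{|Y|=1}=\l\,\partial_r\ou(X)|_{|X|=\l}$) converts this into the stated expression carrying the $\l^{2s-n}$ prefactor on $\partial B^{n+1}(0,\l)\cap\R$.

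The main technical hurdle is the weighted integration by parts together with the pointwise-in-$\l$ use of the trace relation at $\{\tau=0\}$, i.e.\ the rigorous testing of \eqref{1.weak} against the non-smooth function $\partial_\l\ou^\l$. The hypotheses $u\in W^{1,2}_{\mathrm{loc}}(\B)$ and $|x|^a e^u\in L^2_{\mathrm{loc}}(\B)$ are precisely what guarantee $t^{(1-2s)/2}\nabla\ou\in L^{2}_{\mathrm{loc}}(\OR)$ and that the weak formulation can be validated, after a standard cutoff-plus-mollification argument near $\partial\R$, against this test function. Once these analytic prerequisites are in place the derivative formula is manifestly nonnegative, so monotonicity follows at once.
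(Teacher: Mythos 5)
Your proposal follows the paper's approach: rescaling to the unit half-ball via $\ou^\l(Y)=\ou(\l Y)+(2s+a)\log\l$, computing $dE/d\l$ by integrating by parts (with the interior equation and the trace relation at $\{t=0\}$ cancelling the nonlinear term's derivative), applying the scaling identity $\l\,\partial_\l\ou^\l=r\,\partial_r\ou^\l+(2s+a)$, and addressing regularity via mollification using the hypotheses $u\in W^{1,2}_{\mathrm{loc}}$ and $|x|^a e^u\in L^2_{\mathrm{loc}}$. The details you give are consistent with and essentially identical to the paper's computation.
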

The classification of stable and finite Morse index solutions using monotonicity formulas was initiated in a series of articles  \cite{ddw,ddww,w0} for the Lane-Emden equation and \cite{w3} for the Liouville type equations. We refer interested readers to \cite{fs} for an introduction regarding monotonicity formula in this context. Compared to the Lane-Emden equation, proving a priori estimates for the weighted ``Dirichlet" energy and for the boundary integral of linear term are more challenging when dealing with the Gelfand-Liouville equation. This is due to the exponential nonlinearity as opposed to the algebraic nonlinearity in these equations.  In this regard, Wang in \cite{w3},  in the study of stable solutions of Toda system,  used the $\epsilon$-regularity theory to justify the boundedness of these two terms. However, applying the same arguments in the nonlocal setting seems to be a challenging problem. Instead, Hyder and the third author in \cite{hy} developed a more straightforward method to get uniform estimates for these two integrals in the blow-down procedure, see also \cite{fwy,fy} for the recent progress on higher-order Gelfand-Liouville equation and nonlocal Toda system. Our classification result on the nonlocal H\'{e}non-Gelfand-Liouville equation relies on an iteration scheme boosting the integrability of $|x|^a e^{u}$. The revelation of a novel inequality concerning the Poisson integral of $\log{|x|}$ plays a key role, see Lemma \ref{poissonlog},  in establishing the higher-order integrability of $|x|^a e^u$ despite the difficulty caused by the H\'{e}non weighted term.  
 
The current paper is organized as follows. First, in Section \ref{Section2}, we obtain technical a priori  integral estimates concerning solutions of finite Morse index. In Section \ref{Section3}, we derive a monotonicity formula and then perform the blow-down analysis arguments. We classify homogeneous stable solutions, under the condition \eqref{1.stable1}, and apply it to prove Theorem \ref{th1.1}. In  Appendix A, we provide some technical estimates for stable solutions which are needed in our proofs.
\bigskip
	
\begin{center}
List of Notations:
\end{center}

\begin{itemize}
\item $B_R^{n+1}$ is the ball centered at $0$ with radius $R$ in dimension $(n+1)$.
\smallskip
\item $B_R$ is the ball centered at $0$ with radius $R$ in dimension $n$.
\smallskip
\item $B^{n+1}(x_0,R)$ is  the ball centered at $x_0$ with radius $R$ in dimension $(n+1)$.
\smallskip
\item $B(x_0,R)$ is the ball centered at $x_0$ with radius $R$ in dimension $n$.
\smallskip
		\item $X=(x,y)$ represents  a point in $\mathbb{R}_+^{n+1}=\B\times[0,\infty).$
		\smallskip
		\item $\ou$ is $s$-harmonic extension  of $u$ on $\mathbb{R}_+^{n+1}$.
		\item $C$ is  a generic positive constant which may vary from line to line.
		\item $R$ is  a large positive constant that may vary from line to line.
		\item $\sigma$ is the $n$-dimensional Hausdorff measure restricted to $\partial B^{n+1}(x_0,r)$.
		\item $\Omega^c$ is the complement of a subset $\Omega$ of the whole set $\B$.
		\item $\lfloor x \rfloor$ is the greatest integer smaller than or equal to $x$.
		\item $|x|\sim |y|$ means  $|x|$ is comparable with $|y|$ in a sense that $C^{-1}|y|\leq |x|\leq C|y|$ for some positive $C$.
\end{itemize}

	\section{Integral Estimates and Preliminaries}\label{Section2}
	In this section,   we use the stability condition outside a compact set to derive  energy estimates on $|x|^a e^u$  and the integral representation formula for $u$. We also give Moser iteration type estimates, see Crandall-Rabinowitz \cite{CR} and Farina \cite{f2,f}, on the weighted nonlinearity.
	
	\begin{lemma}
		\label{leh.1}
		Let $u$ be a solution to \eqref{fg-1} for some $n>2s$. Suppose that $u$ is stable outside a compact set. Then
		\begin{equation}
		\label{h.7}
		\int_{B_r}|x|^a e^udx\leq Cr^{n-2s}\quad\text{for every } r\geq 1.
		\end{equation}
	\end{lemma}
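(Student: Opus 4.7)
The strategy is the classical one: plug a cutoff test function $\varphi_r$ into the stability inequality \eqref{1.stablecondition}, making $\varphi_r$ equal to $1$ on $B_r$ and supported in $B_{2r}$, while arranging that its Gagliardo seminorm scales like $r^{n-2s}$. The weight $|x|^a$ causes no additional difficulty because it sits on both sides of the bound once $\varphi_r\equiv 1$ on the region where we want to capture $\int|x|^ae^u$. Fix $R_0>0$ such that $u$ is stable in $\B\setminus\overline{B_{R_0}}$, so admissible test functions must vanish on $\overline{B_{R_0}}$. Choose, once and for all, $\psi_0\in C^\infty(\B)$ with $\psi_0\equiv 0$ on $B_{R_0}$, $\psi_0\equiv 1$ outside $B_{2R_0}$, and a profile $\chi\in C_c^\infty(B_2)$ with $\chi\equiv 1$ on $B_1$ and $0\leq\chi\leq 1$. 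For $r\geq 4R_0$ I take
\begin{equation*}
\varphi_r(x):=\psi_0(x)\,\chi(x/r)\in C_c^\infty\bigl(\B\setminus\overline{B_{R_0}}\bigr),\qquad \varphi_r\equiv 1\text{ on }B_r\setminus B_{2R_0}.
\end{equation*}
Inserting $\varphi_r$ into \eqref{1.stablecondition} yields
\begin{equation*}
\int_{B_r\setminus B_{2R_0}}|x|^ae^u\,dx\;\leq\;\int_{\B}|x|^ae^u\varphi_r^2\,dx\;\leq\;\frac{c_{n,s}}{2}\iint_{\B\times\B}\frac{(\varphi_r(x)-\varphi_r(y))^2}{|x-y|^{n+2s}}\,dx\,dy.
\end{equation*}

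The main step is to bound the double integral by $Cr^{n-2s}$. Writing $\chi_r(x):=\chi(x/r)$, the substitution $x\mapsto rx$, $y\mapsto ry$ gives the exact scaling
\begin{equation*}
\iint_{\B\times\B}\frac{(\chi_r(x)-\chi_r(y))^2}{|x-y|^{n+2s}}\,dx\,dy\;=\;r^{n-2s}\iint_{\B\times\B}\frac{(\chi(x)-\chi(y))^2}{|x-y|^{n+2s}}\,dx\,dy.
\end{equation*}
For the product $\varphi_r=\psi_0\chi_r$ I would use the pointwise identity
\begin{equation*}
\varphi_r(x)-\varphi_r(y)=\psi_0(x)\bigl(\chi_r(x)-\chi_r(y)\bigr)+\chi_r(y)\bigl(\psi_0(x)-\psi_0(y)\bigr),
\end{equation*}
together with $\|\psi_0\|_\infty,\|\chi_r\|_\infty\leq 1$, to conclude
\begin{equation*}
\iint_{\B\times\B}\frac{(\varphi_r(x)-\varphi_r(y))^2}{|x-y|^{n+2s}}\,dx\,dy\;\leq\;2r^{n-2s}[\chi]_{H^s}^{2}+2[\psi_0]_{H^s}^{2}\;\leq\;Cr^{n-2s},
\end{equation*}
where the Gagliardo seminorm $[\psi_0]_{H^s}^{2}$ is finite because $\psi_0-1$ is smooth and compactly supported.

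Combining the two displayed inequalities gives $\int_{B_r\setminus B_{2R_0}}|x|^ae^u\,dx\leq Cr^{n-2s}$ for every $r\geq 4R_0$. The assumption $|x|^ae^u\in L^1_{\mathrm{loc}}(\B)$ makes $\int_{B_{2R_0}}|x|^ae^u\,dx$ a fixed finite constant, absorbed into $Cr^{n-2s}$ since $r\geq 4R_0\geq 1$ here; hence \eqref{h.7} for $r\geq 4R_0$. For $1\leq r<4R_0$ the bound is trivial from local integrability, since $r^{n-2s}$ is bounded away from zero on this range. The only genuinely delicate point is the Gagliardo seminorm estimate for the product cutoff $\psi_0\chi_r$; everything else is a direct consequence of scaling and the hypotheses. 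In particular, the weight $|x|^a$ never needs to be estimated separately, which is what makes this preliminary bound structurally the same as in the $a=0$ case.
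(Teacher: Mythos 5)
Your proof is correct and follows essentially the same route as the paper: the paper also tests the stability inequality with a product cutoff $\eta_{2R}(x)\varphi(x/r)$ (inner truncation away from the instability region times an outer cutoff at scale $r$) and bounds its fractional energy by $C+Cr^{n-2s}$. You merely supply the details of the product/scaling estimate for the Gagliardo seminorm that the paper leaves implicit.
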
	
	\begin{proof} Let $R\gg1$ be such that $u$ is stable on $\B\setminus B_R$. We introduce the following test function $\psi(x)=\eta_{2R}(x)\varphi(\frac{x}{r})$, where
	\begin{align*}
		\eta_t(x)=\left\{\begin{array}{ll}
		0\quad&\text{for }|x|\leq t\\
		\\
		1\quad&\text{for }|x|\geq 2t
		\end{array}\right.  ~\mbox{and} \quad
		\varphi(x)=\left\{\begin{array}{ll}
		1\quad&\text{for }|x|\leq 1\\
		\\
		0\quad&\text{for }|x|\geq 2  \end{array}\right..
		\end{align*}
		It is not difficult to see that $\psi$ is a good test function for the stability condition \eqref{1.stablecondition}. Hence,
		\begin{equation*}
		\int_{B(x_0,r)}|x|^a e^udx\leq C+\int_{\B}|\ss\psi|^2dx\leq C+Cr^{n-2s}\leq Cr^{n-2s},
		\end{equation*}
		where we use $n>2s$ and $r\geq 1.$
	\end{proof}
	
	It is  straightforward to see that if $u$ is a solution to \eqref{fg-1}, then
	$$u^\l(x)=u(\l x)+(2s+a)\log\l , $$
	provides a family of solutions to \eqref{fg-1}. In addition,  $u$ is stable on $\B\setminus B_R$ if and only if $u^\l$ is  stable  on $\B\setminus B_{\frac R\l}$. Due to a simple scaling argument, one can conclude that there exists $C>0$ independent of $\l$ such that
		\begin{align}
		\label{f.1}
		\int_{B_r}|x|^a e^{u^\l}dx\leq Cr^{n-2s}\quad\text{for every }r\geq 1.
		\end{align}
	We now set
	\begin{equation*}
	v^\l(x):=c(n,s)\int_{\B}\left(\frac{1}{|x-y|^{n-2s}}-\frac{1}{(1+|y|)^{n-2s}}\right)|y|^a e^{u^\l(y)}dy,
	\end{equation*}
	where $c(n,s)$ is chosen such that
	$$c(n,s)(-\Delta)^s\frac{1}{|x-y|^{n-2s}}=\delta(x-y).$$
	It is straightforward to see that $v^\l\in L^1_{\mathrm{loc}}(\B)$. In addition, we have that $v^\lambda\in L_s(\B)$. This is the conclusion of the following lemma.
	
	\begin{lemma}
		\label{lef.1}
There exists a $C>0$ independent of $\l$ such that
		\begin{equation}
		\label{f.4}
		\int_{\B}\frac{|v^\l(x)|}{1+|x|^{n+2s}}dx\leq C.
		\end{equation}
		Moreover, for $R>0$ sufficiently large, we have
		\begin{equation}
		\label{f.5}
		v^\l(x)=c(n,s)\int_{B_{2R}}\frac{|y|^a e^{u^\l(y)}}{|x-y|^{n-2s}}dy+O_R(1)\quad \mbox{for}~\l\geq 1,~|x|\leq R.
		\end{equation}
	\end{lemma}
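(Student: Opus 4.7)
The plan is to prove \eqref{f.5} first and then to leverage the same kind of decay estimates for \eqref{f.4} via Fubini. Both claims rely crucially on the uniform growth bound \eqref{f.1}, which is itself an immediate consequence of the stability-outside-a-compact-set hypothesis combined with the scaling invariance $u^\lambda(x)=u(\lambda x)+(2s+a)\log\lambda$ of \eqref{fg-1}.

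For \eqref{f.5}, I would fix $R\geq 1$, $|x|\leq R$ and $\lambda\geq 1$, and split the defining integral of $v^\lambda(x)$ as $\int_{B_{2R}}+\int_{\B\setminus B_{2R}}$. On $B_{2R}$ the main kernel $|x-y|^{-(n-2s)}$ produces exactly the integral appearing in \eqref{f.5}, while the subtracted piece $(1+|y|)^{-(n-2s)}\leq 1$ contributes at most a constant multiple of $\int_{B_{2R}}|y|^ae^{u^\lambda}dy\leq CR^{n-2s}=O_R(1)$ by \eqref{f.1}. On $\B\setminus B_{2R}$ with $|x|\leq R$, both $|x-y|$ and $1+|y|$ are comparable to $|y|$ and differ by at most $R+1$, so the mean value theorem applied to $t\mapsto t^{-(n-2s)}$ yields
\begin{equation*}
\left|\frac{1}{|x-y|^{n-2s}}-\frac{1}{(1+|y|)^{n-2s}}\right|\leq\frac{C_R}{|y|^{n-2s+1}}.
\end{equation*}
A dyadic decomposition of $\B\setminus B_{2R}$ combined with \eqref{f.1} then produces a geometric series $C_R\sum_{2^k\geq 2R}2^{-k(n-2s+1)}\cdot 2^{(k+1)(n-2s)}\sim C_R\, R^{-1}$, hence an $O_R(1)$ contribution uniformly in $\lambda\geq 1$.

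For \eqref{f.4}, an application of Fubini yields $\int_{\B}\frac{|v^\lambda(x)|}{1+|x|^{n+2s}}dx\leq c(n,s)\int_{\B}K(y)\,|y|^ae^{u^\lambda(y)}dy$, where
\begin{equation*}
K(y):=\int_{\B}\frac{1}{1+|x|^{n+2s}}\left|\frac{1}{|x-y|^{n-2s}}-\frac{1}{(1+|y|)^{n-2s}}\right|dx.
\end{equation*}
For $|y|\leq 2$ the kernel $K(y)$ is absolutely bounded, since the local singularity at $x=y$ is integrable ($n-2s<n$) and the tail decays like $|x|^{-2n}$. For $|y|\geq 2$ I would split the $x$-integral into $\{|x|\leq|y|/2\}$, where the same mean value estimate above, integrated against $(1+|x|^{n+2s})^{-1}$, yields a bound of the form $C(1+|y|)^{-(n-2s+\delta)}$ for some $\delta>0$ (whose precise value depends mildly on whether $2s$ is $<,=,$ or $>1$), and $\{|x|>|y|/2\}$, where a rescaling $x=|y|\xi$ makes it immediate that each of the two pieces of the kernel contributes at most $C|y|^{-n}$. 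In all cases $K(y)\leq C(1+|y|)^{-(n-2s+\delta)}$ with $\delta>0$, and a dyadic estimate exactly as before gives $\int K(y)|y|^ae^{u^\lambda}dy\leq C'$ uniformly in $\lambda$.

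I expect the main technical obstacle to be extracting the strict gain $\delta>0$ in the decay of $K(y)$: without using cancellation, each of the two pieces of the kernel individually only yields the borderline exponent $n-2s$, and the resulting dyadic sum $\sum_k 2^{-k(n-2s)}\cdot 2^{k(n-2s)}$ diverges because of the saturating growth in \eqref{f.1}. The extra decay is supplied precisely by the subtracted term in the definition of $v^\lambda$, whose effect is captured by the mean value theorem on the inner region $|x|\leq|y|/2$; this is also what makes the particular kernel $|x-y|^{-(n-2s)}-(1+|y|)^{-(n-2s)}$ the natural choice for the integral representation.
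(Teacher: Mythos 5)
Your proposal is correct and essentially follows the paper's argument: both rest on the uniform bound \eqref{f.1} plus the mean-value cancellation $|f(x,y)|\leq C(1+|x|)/|y|^{n-2s+1}$ in the region $|x|\leq|y|/2$, which supplies the crucial extra decay $\delta>0$ needed to make the dyadic sums converge. The only cosmetic differences are that the paper treats \eqref{f.4} first (splitting the $x$-integral into three regions $A_1=B_{|y|/2}$, $A_2=B(y,|y|/2)$, $A_3=(A_1\cup A_2)^c$) whereas you prove \eqref{f.5} first and merge $A_2\cup A_3$ into a single outer region handled by the rescaling $x=|y|\xi$; both give $E(y)\lesssim|y|^{-(n-2s+\gamma)}$ for $|y|\geq 2$ with $\gamma>0$.
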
	
	\begin{proof}
		Let us first consider the term
		$$E(y):=\int_{\B}\frac{|f(x,y)|}{1+|x|^{n+2s}}dx\quad\mbox{for}\quad |y|\geq 2,$$
		where
		$$f(x,y)=\left(\frac{1}{|x-y|^{n-2s}}-\frac{1}{(1+|y|)^{n-2s}}\right).$$
	    Now, consider the following decomposition of the entire space into three parts
		$$\B=\underbrace{B_{|y|/2}}_{A_1}\cup \underbrace{B(y,|y|/2)}_{A_2}\cup \underbrace{(B_{|y|/2}\cup B(y,|y|/2))^c}_{A_3}.$$
One can verify that the following estimates hold
		\begin{equation*}
		|f(x,y)|\leq C
		\begin{cases}
		\dfrac{1+|x|}{|y|^{n-2s+1}} \quad &\mbox{if}~x\in A_1,\\
		\dfrac{1}{|y|^{n-2s}}+\dfrac{1}{|x-y|^{n-2s}} \quad &\mbox{if}~x\in A_2,\\
		\dfrac{1}{|y|^{n-2s}}  \quad &\mbox{if}~x\in A_3.
		\end{cases}
		\end{equation*}
Set
		\begin{equation*}
		E(y)=\sum_{i=1}^3 E_i(y) \ \ \text{for} \ \ E_i(y)=\int_{A_i}\dfrac{|f(x,y)|}{1+|x|^{n+2s}}dx.
		\end{equation*}
		For $E_1(y)$, one has
		\begin{equation*}
		|E_1(y)|\leq\dfrac{C}{|y|^{n-2s+1}}\int_{A_1}\dfrac{1+|x|}{1+|x|^{n+2s}}dx\leq
		\begin{cases}
		\frac{C}{|y|^{n-2s+1}},\quad &\mbox{if}~2s>1,\\
		\frac{C}{|y|^{n}}\log|y|,\quad &\mbox{if}~2s=1,\\
		\frac{C}{|y|^{n}},\quad &\mbox{if}~2s<1.
		\end{cases}
		\end{equation*}
		Next, we estimate the second term. Notice that $x\sim y$ for $x\in A_2$. Therefore,
		\begin{equation*}
		\begin{aligned}
		|E_2(y)|&\leq \frac{C}{|y|^n}+\frac{C}{|y|^{n+2s}}
		\int_{A_3}\frac{1}{|x-y|^{n-2s}}dx\\
		&\leq\frac{C}{|y|^n}+\frac{C}{|y|^{n+2s}}
		\int_{B_{3|y|}}\frac{1}{|x|^{n-2s}}dx\leq\frac{C}{|y|^n}.
		\end{aligned}
		\end{equation*}
		While for the last term $E_3(y)$, we have
		\begin{equation*}
		|E_3(y)|\leq\frac{C}{|y|^{n-2s}}\int_{\B\setminus A_1}\frac{1}{1+|x|^{n+2s}}dx\leq \frac{C}{|y|^n}.
		\end{equation*}
		Thus, there exists $\gamma>0$ such that
		\begin{equation*}
		|E(y)|\leq\dfrac{C}{|y|^{n-2s+\gamma}}\quad \mbox{for}\quad |y|\geq 2.
		\end{equation*}
		Therefore, we get
		\begin{equation}
		\label{f.111}
		\begin{aligned}
		\int_{\B}\dfrac{|v^\l(x)|}{1+|x|^{n+2s}}dx\leq~&
		C\int_{\mathbb{R}^n\setminus B_2}|y|^a e^{u^\l(y)}|E(y)|dy\\
		&+C\int_{\B}\int_{B_2}\frac{|y|^a e^{u^\l(y)}}{|x-y|^{n-2s}(1+|x|^{n+2s})}dydx\\
		&+C\int_{\B}\int_{B_2}\frac{|y|^a e^{u^\l(y)}}{((1+|y|)^{n-2s})(1+|x|^{n+2s})}dydx.
		\end{aligned}
		\end{equation}
		One can  check that the latter two terms in (\ref{f.111}) are bounded, so it remains to show that
\begin{equation*}
\begin{aligned}
&\int_{\mathbb{R}^n\setminus B_2}|y|^a e^{u^\l(y)}|E(y)|dy \leq C  \int_{\mathbb{R}^n\setminus B_2}|y|^a e^{u^\l(y)} \frac{1}{|y|^{n-2s+\gamma}}dy\\
&\leq C \sum\limits_{i=1}^{\infty} \int_{2^i \leq |y|\leq 2^{i+1}} \frac{|y|^a e^{u^{\l}}}{|y|^{n-2s+\gamma}}dy\leq C\sum\limits_{i=1}^{\infty} \frac{1}{2^{i\gamma}}<\infty.
\end{aligned}
\end{equation*}		
		This finishes the proof of \eqref{f.4}. In order to prove \eqref{f.5}, we have
		\begin{equation*}
		\begin{aligned}
v^{\l}(x) =~& c(n,s)\int_{\B}\left(\frac{1}{|x-y|^{n-2s}}-\frac{1}{(1+|y|)^{n-2s}}\right) |y|^a e^{u^{\l}}dy\\
          =~& c(n,s)\int_{\B\setminus B_{2R}}\left(\frac{1}{|x-y|^{n-2s}}-\frac{1}{(1+|y|)^{n-2s}}\right) |y|^a e^{u^{\l}}dy\\
           ~&+ c(n,s)\left(\int_{B_{2R}}	\frac{|y|^a e^{u^{\l}}}{|x-y|^{n-2s}}dy - \int_{B_{2R}} \frac{|y|^a e^{u^{\l}}}{(1+|y|)^{n-2s}}dy\right)\\
          =~& c(n,s)\int_{B_{2R}}	\frac{|y|^a e^{u^{\l}}}{|x-y|^{n-2s}}dy+ O_R(1) ,
		\end{aligned}
		\end{equation*}
where we use that		
		\begin{equation*}
		\left|\frac{1}{|x-y|^{n-2s}}-\frac{1}{(1+|y|)^{n-2s}}\right|\leq C\frac{1+|x|}{|y|^{n-2s+1}} \quad\text{for }|x|\leq R,\, |y|\geq 2R.
		\end{equation*}
Hence,  we finish the proof.
	\end{proof}
Here, we provide an integral representation of $u^\l(x)$. This plays an essential role in our proof of main results.
	
	\begin{lemma}\label{u-representation}For some $c_\lambda\in\RR$,   we have
		\begin{equation}
		\label{f.11}
		u^\l(x)=c(n,s)\int_{\B}\left(\frac{1}{|x-y|^{n-2s}}-\frac{1}{(1+| y|)^{n-2s}}\right)|y|^a e^{u^\l(y)}dy+c_\l.
		\end{equation}
	\end{lemma}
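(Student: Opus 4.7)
The plan is to prove that $w^\l:=u^\l-v^\l$ is a distributional $s$-harmonic function on $\B$ belonging to $L_s(\B)$, and then apply a Liouville-type theorem to conclude that $w^\l$ is constant.

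First I would verify that $v^\l$ solves $\s v^\l = |x|^a e^{u^\l}$ in the distributional sense. For any test $\vp\in C_c^\infty(\B)$, write $\int v^\l\,\s\vp\,dx$, interchange the $x$- and $y$-integrals by Fubini, and use the defining identity $c(n,s)\,\s|x-y|^{-(n-2s)}=\delta_y$ (the piece $\frac{1}{(1+|y|)^{n-2s}}$ is constant in $x$ and contributes zero). The interchange is legitimate thanks to the decay $\bigl|\frac{1}{|x-y|^{n-2s}}-\frac{1}{(1+|y|)^{n-2s}}\bigr|\leq \frac{C(1+|x|)}{|y|^{n-2s+1}}$ for $|y|\geq 2|x|$ used in Lemma \ref{lef.1}, the integral bound of Lemma \ref{leh.1}, and the assumption $|y|^a e^{u^\l}\in L^1_{\mathrm{loc}}(\B)$. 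Since $u^\l$ solves \eqref{fg-1} weakly, subtraction gives $\s w^\l=0$ in $\mathcal{D}'(\B)$. Combined with $v^\l\in L_s(\B)$ (Lemma \ref{lef.1}) and $u^\l\in L_s(\B)$ (hypothesis), we obtain $w^\l\in L_s(\B)$.

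Next I would invoke the classical Liouville theorem: a tempered distribution $w^\l$ annihilated by $\s$ has Fourier transform supported at the origin and is therefore a polynomial, and $w^\l\in L_s(\B)$ forces its degree to be strictly less than $2s$. For $2s\leq 1$ this already yields $w^\l\equiv c_\l$ and the proof is complete. For $2s>1$, one has $w^\l(x)=c_\l+b\cdot x$ for some $b\in\B$, and it remains to show $b=0$. To exclude the linear term, I would use the cancellation built into the modified kernel together with a dyadic decomposition and Lemma \ref{leh.1} to establish
\begin{equation*}
|v^\l(x)|\leq C\bigl(1+\log(1+|x|)\bigr),
\end{equation*}
so that $v^\l$ grows strictly sublinearly. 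Assuming $b\neq 0$, on the spherical cap $E_r:=\{x\in B_r:\,b\cdot x\geq r|b|/2\}$, whose volume is $\gtrsim r^n$, the identity $u^\l=v^\l+c_\l+b\cdot x$ forces $u^\l(x)\geq r|b|/4$ a.e.\ on $E_r$ for large $r$. But then
\begin{equation*}
\int_{B_r}|x|^ae^{u^\l}\,dx\geq\int_{E_r}|x|^ae^{u^\l}\,dx\gtrsim r^{n+a}e^{r|b|/8},
\end{equation*}
contradicting the bound $Cr^{n-2s}$ of Lemma \ref{leh.1}. Hence $b=0$ and \eqref{f.11} holds.

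The main obstacle is the sublinear growth bound on $v^\l$: the two terms of the modified kernel both yield logarithmically divergent integrals separately, and only the cancellation in their difference saves the estimate; keeping track of this cancellation in the presence of the H\'enon weight $|y|^a$ requires a careful dyadic split into the regions $|y|\leq|x|/2$, $|x|/2<|y|\leq 2|x|$, and $|y|>2|x|$, with Lemma \ref{leh.1} applied separately in each. The Liouville step itself is standard once the $s$-harmonicity and $L_s$-membership of $w^\l$ have been secured.
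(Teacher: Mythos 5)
Your proposal follows essentially the same route as the paper: write $u^\l=v^\l+h^\l$, invoke the Liouville theorem for $s$-harmonic functions in $L_s(\B)$ (the paper cites \cite[Lemma 2.4]{ha}) to reduce $h^\l$ to an affine function, and rule out the linear part by combining a logarithmic lower bound on $v^\l$ with the growth bound of Lemma \ref{leh.1}, which an exponentially growing $e^{u^\l}$ would violate. One small remark: only the one-sided bound $v^\l(x)\geq -C\log|x|$ is needed (and is exactly what the paper proves); the pointwise upper half of your claimed two-sided estimate does not follow from Lemma \ref{leh.1} alone, but your contradiction argument never actually uses it.
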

    \begin{proof} 
	We define the difference of $u^\l$ and $v^\l$ by $h^\l$. Based on the definition of $v^\l$, we have $h^\lambda$ is a $s$-harmonic function in $\B$. By \cite[Lemma 2.4]{ha} one can get that   $h^\l $ is either a constant, or   a polynomial of degree one. To verify that $h^\l$ is constant,   we first claim that
		\begin{equation}
		\label{f.10}
		v^\l(x)\geq -C\log|x|\quad\mbox{for}~|x|~\mbox{large},
		\end{equation} for some constant $C>0$.
		Indeed,  by \eqref{f.1} we have
		\begin{align*}
		v^\l(x)&\geq -C\left(1+\int_{2\leq|y|\leq2|x|}\frac{1}{(1+|y|)^{n-2s}}
		|y|^a e^{u^\l(y)}dy\right)\\
		&\geq -C-C\sum_{i=1}^{\lfloor\log_2(2|x|)\rfloor}\int_{2^i\leq |y|\leq 2^{i+1}}\frac{1}{(1+|y|)^{n-2s}}|y|^a e^{u^\l(y)}dy\\
		%&\geq -C- C\log(2|x|)\\
		&\geq -C- C\log|x|.
		\end{align*}
 Hence, \eqref{f.10} is proved and $u^\l(x)\geq -C\log|x| +h^\l(x).$ Using \eqref{f.1} again, we derive that $h^\l  \equiv c_\l$ and \eqref{f.11} holds.
	\end{proof}

We now establish higher-order integrability estimates for the finite Morse index solutions of \eqref{fg-1}. The methods and ideas applied here are motivated by the ones given in \cite{f,f2,df,CR,dggw,hy,fw}. We present the iteration scheme in this section and, for the convenience of readers, leave some necessary steps to Appendix A. We overcome the major difficulty due to the H\'{e}non weight by absorbing it into the exponential so that Jensen's inequality applies.	Before proving the integral estimate, we need the following lemma concerning the Poisson integral of $\log{|x|}$. This may be known to the analysis community but we could not find a reference for it, so we prove it here. 
\begin{lemma}
\label{poissonlog}
Let $P(X,y)$ be as defined in \eqref{poisson} when $X=(x,t)\in\mathbb R^{n+1}_+$. For any $|x|,t>0$, we have
\begin{equation}
\label{logext}
\int_{\B} P(X,y)\log{|y|}dy\geq \log{|x|}.
\end{equation}
\end{lemma}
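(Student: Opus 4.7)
The plan is to prove the stronger estimate $\int_\B P(X,y)\log|y|\,dy\geq \log|X|$, where $|X|=\sqrt{|x|^2+t^2}$ denotes the Euclidean norm in $\mathbb{R}^{n+1}$; since $|X|\geq|x|$, this implies \eqref{logext}. Set $h(X):=\int_\B P(X,y)\log|y|\,dy$ and $w(X):=\log|X|$; both have boundary trace $\log|x|$ on $\{t=0\}$, and $h$ is the $s$-harmonic extension of $\log|y|$, so $\operatorname{div}(t^{1-2s}\nabla h)=0$ in $\R$.

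First, I would observe that a direct computation gives
\begin{equation*}
\operatorname{div}(t^{1-2s}\nabla w)=(n-2s)\,\frac{t^{1-2s}}{|X|^2}\geq 0,
\end{equation*}
using $n>2s$, so $w$ is $s$-subharmonic. Hence $g:=h-w$ satisfies
\begin{equation*}
\operatorname{div}(t^{1-2s}\nabla g)=-(n-2s)\,\frac{t^{1-2s}}{|X|^2}\leq 0,
\end{equation*}
with vanishing trace on $\{t=0\}$.

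Second, I would exploit the scaling identity $P(\l X,\l y)=\l^{-n}P(X,y)$ to conclude $h(\l X)=h(X)+\log\l$, matching $w(\l X)=w(X)+\log\l$, so $g$ is $0$-homogeneous. Combined with the rotational invariance of $P$ in $x$, this forces $g(X)=G(\theta)$ to depend only on the polar angle $\theta\in[0,\pi/2]$ between $X$ and the $t$-axis (so that $\cos\theta=t/|X|$, $\sin\theta=|x|/|X|$).

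Third, substituting $g=G(\theta)$ into the PDE above and simplifying in spherical coordinates on $\mathbb{R}^{n+1}$ yields the self-adjoint ODE
\begin{equation*}
\frac{d}{d\theta}\!\left[\sin^{n-1}\theta\cos^{1-2s}\theta\, G'(\theta)\right]=-(n-2s)\sin^{n-1}\theta\cos^{1-2s}\theta,
\end{equation*}
with $G(\pi/2)=0$ (boundary condition) and $\sin^{n-1}\theta\cos^{1-2s}\theta\,G'(\theta)\to 0$ as $\theta\to 0^+$ (the function $g$ is smooth and $0$-homogeneous on the interior $t$-axis, so $G'(0)=0$). Integrating from $0$ to $\theta$ gives
\begin{equation*}
G'(\theta)=-\frac{n-2s}{\sin^{n-1}\theta\cos^{1-2s}\theta}\int_0^\theta \sin^{n-1}\phi\cos^{1-2s}\phi\,d\phi\leq 0,
\end{equation*}
so $G$ is nonincreasing on $[0,\pi/2]$, and thus $G\geq G(\pi/2)=0$. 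This proves $h(X)\geq w(X)\geq\log|x|$.

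The main obstacle is the derivation of the reduced ODE: one must carefully compute $\operatorname{div}(t^{1-2s}\nabla\cdot)$ acting on a scale-invariant, rotation-symmetric function in spherical coordinates on $\mathbb{R}^{n+1}$, and verify that the boundary term at $\theta=0^+$ vanishes. Once the reduction to this ODE is carried out, the conclusion is a one-line integration together with the monotonicity it provides.
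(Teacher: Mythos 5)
Your argument is correct, and it takes a genuinely different route from the paper's. Let me first confirm the key computations and then compare.

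The identity $\operatorname{div}(t^{1-2s}\nabla w)=(n-2s)\,t^{1-2s}/|X|^2$ for $w=\log|X|$ checks out: $\Delta_{\mathbb{R}^{n+1}}\log|X|=(n-1)/|X|^2$ and $\partial_t(t^{1-2s})\cdot\partial_t w=(1-2s)t^{1-2s}/|X|^2$, giving the factor $n-2s>0$. The scaling identity $P(\lambda X,\lambda y)=\lambda^{-n}P(X,y)$ together with $\int P(X,y)\,dy=1$ does give $h(\lambda X)=h(X)+\log\lambda$, so $g=h-w$ is $0$-homogeneous and, by rotational invariance, a function $G(\theta)$ of the colatitude alone. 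Writing $t^{1-2s}\nabla g$ in spherical coordinates on $\mathbb{R}^{n+1}$ and taking the divergence indeed produces
\begin{equation*}
\operatorname{div}(t^{1-2s}\nabla g)=\frac{r^{-1-2s}}{\sin^{n-1}\theta}\,\frac{d}{d\theta}\!\left[\sin^{n-1}\theta\cos^{1-2s}\theta\,G'(\theta)\right],
\end{equation*}
matching your ODE after equating with the right-hand side $-(n-2s)r^{-1-2s}\cos^{1-2s}\theta$. The no-flux condition at $\theta=0$ is justified: $h$ and $w$ are smooth on the interior $t$-axis, rotational symmetry of a smooth function forces $G$ to be even to leading order in $\theta$, hence $G'(0)=0$, and the singular ODE admits no bounded solution otherwise. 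Integrating then gives $G'\leq 0$ on $(0,\pi/2)$, so $G\geq G(\pi/2)=0$, i.e.\ $h\geq\log|X|\geq\log|x|$.

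The paper's proof is a direct computation: it rescales $y\mapsto|x|z$, splits the integral into $|z|<1$ and $|z|>1$, maps the outer part back into $B_1$ via the inversion $z\mapsto z/|z|^2$ using $|\theta-z/|z|^2|^2=|\theta-z|^2/|z|^2$ for $\theta\in\mathbb{S}^{n-1}$, and observes that the resulting integrand over $B_1$ is pointwise nonnegative because $n>2s$ and $\log|z|\leq 0$ there. That argument is shorter and entirely elementary, and works for all $n>2s$ just as yours does. Your PDE route is conceptually different: it interprets $h$ as an $s$-harmonic, $0$-homogeneous, axially symmetric function, compares it with the explicit $s$-subharmonic profile $\log|X|$, and reduces to a first-order ODE for $G'$. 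What you gain is the strictly stronger inequality $\int_{\B}P(X,y)\log|y|\,dy\geq\log\sqrt{|x|^2+t^2}$, together with the monotonicity of $G$ in $\theta$; what you pay is the need for elliptic regularity near the $t$-axis and a careful verification of the no-flux boundary condition there. Both approaches are valid, and yours also sidesteps the dichotomy raised in the paper's remark (the subharmonicity-of-$\log|x|$ argument in $\mathbb{R}^n$ is restricted to $n\geq 2$, whereas your argument, like the paper's main one, works for all $n>2s$).
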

\begin{proof}
The difference of two sides of \eqref{logext} yields that
\begin{equation}
\label{logext1}
\begin{aligned}
&\int_{\B} P(X,y)\left(\log{|y|}-\log{|x|}\right)dy\quad \\
&= d(n,s)\int_{\B} \frac{t^{2s}|x|^n}{(|x-|x|z|^2+t^2)^{\frac{n+2s}{2}}}\log{|z|}dz~\quad (\mbox{where}~z=y/|x|)\\
&= C\tilde{t}^{2s} \int_{\B} \frac{\log{|z|}}{(|\theta-z|^2+\tilde{t}^2)^{\frac{n+2s}{2}}}dz~\quad  (\mbox{where}~\theta=x/|x|~\mathrm{and}~\tilde t=t/|x|)\\
&= C \tilde{t}^{2s} \left(\int_{B_1}+\int_{\B\setminus B_1}\right)\frac{\log{|z|}}{(|\theta-z|^2+\tilde{t}^2)^{\frac{n+2s}{2}}}dz\\
&= C \tilde{t}^{2s} \int_{B_1}\left(\frac{1}{(|\theta-z|^2+\tilde{t}^2)^{\frac{n+2s}{2}}}-\frac{1}{(|\theta-\frac{z}{|z|^2}|^2+\tilde{t}^2)^{\frac{n+2s}{2}}|z|^{2n}}\right)\log{|z|} dz\\
&= C \tilde{t}^{2s} \int_{ B_1}\left(\frac{1}{(|\theta-z|^2+\tilde{t}^2)^{\frac{n+2s}{2}}}-\frac{|z|^{2s-n}}{(|\theta-z|^2+|z|^2\tilde{t}^2)^{\frac{n+2s}{2}}}\right)\log{|z|} dz,
\end{aligned}
\end{equation}
where we use the facts that $|\theta-\frac{z}{|z|^2}|^2=\frac{|\theta-z|^2}{|z|^2}$ for any $\theta\in \mathbb{S}^{n-1}$ in the last step. Using $n>2s$, one can  verify that
\begin{align*}
&\frac{1}{(|\theta-z|^2+\tilde{t}^2)^{\frac{n+2s}{2}}}-\frac{|z|^{2s-n}}{(|\theta-z|^2+|z|^2\tilde{t}^2)^{\frac{n+2s}{2}}}\leq 0\quad \mathrm{for}~|z|\leq1.
\end{align*}
Together with the fact $\log|z|\leq 0$ for $z\leq1$, we conclude  that the right-hand side of \eqref{logext1} is non-negative and this proves \eqref{logext}.
\end{proof}

Note that if $n\geq 2$, one can provide an alternative proof of \eqref{logext}.  We have 
\begin{equation}
\label{logext2}
\begin{aligned}
&\int_{\B} P(X,y)\left(\log{|y|}-\log{|x|}\right)dy\quad \\
&=\int_{0}^\infty\int_{\partial B(x,\vartheta)}P((x-z,t))(\log|x-z|-\log|x|)
d\sigma d\vartheta.
\end{aligned}
\end{equation}
It is known that 
$$\int_{\partial B(x,\vartheta)}\log|x-z|d\sigma\geq\int_{\partial B(x,\vartheta)}\log|x|d\sigma, $$
due to the fact that $\log|x|$ is subharmonic when $n\geq2.$ Hence the right-hand side of \eqref{logext2} is non-negative and \eqref{logext} is proved. The same kind of inequality should hold for all subharmonic functions including but not limited to $\log{|x|}$. It provides an angle to view \eqref{logext} as a consequence of mean value inequality of subharmonic function. We wonder whether there is a specific interpretation of the inequality \eqref{logext} if $n\in (2s,2)$. 

Now, we are ready to state and prove the main result of this section. 
\begin{proposition}
		\label{prop-2.6}
		Let $u\in L_s(\B)\cap \dot H^s_{\mathrm{loc}}(\B)$ be a solution to \eqref{fg-1}. Assume that $u$ is stable on $\B\setminus B_R$ for some $R>0$. Then, for every $p\in [1,\min\{5,1+\frac{n}{2s}\})$ there exists $C=C(p)>0$ such that for $r$ large
		\begin{align}
		\label{est-p-1}
		\int_{B_{2r}\setminus B_{r}}\left(|x|^a e^{u}\right)^p dx\leq Cr^{n-2ps}.
		\end{align}
In particular,
		\begin{itemize}
			\item[(i)] for $|x|$ large and for  every  $p\in[1,\min\{5,1+\frac{n}{2s}\})$
			\begin{align}
			\label{est-p-2} \int_{B(x,|x|/2)}\left(|y|^a e^{u (y)}\right)^{p} dy\leq C(p)|x|^{n-2ps},
			\end{align}
			\item[(ii)] for $r$ large and for  every $p\in[1,\min\{5, \frac{n}{2s}\})$
			\begin{align} \label{est-p-3}\int_{B_r\setminus  B_{2R}}\left(|x|^a e^{u}\right)^p dx\leq C(p)r^{n-2ps} .
			\end{align}
		\end{itemize}
	\end{proposition}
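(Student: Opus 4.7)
The plan is to run a Moser-type iteration on the stability inequality \eqref{1.stablecondition}, starting from the base estimate $\int_{B_r}|x|^ae^u\le Cr^{n-2s}$ of Lemma~\ref{leh.1}. At each step we show that if $(|x|^ae^u)^p$ integrates to $Cr^{n-2ps}$ on dyadic annuli, then so does $(|x|^ae^u)^{p'}$ for a strictly larger exponent $p'$, and we stop once the iteration covers every admissible target below $\min\{5,1+n/(2s)\}$. The lower cut $5$ is the expected Farina-type algebraic threshold from the stability test, and the cut $1+n/(2s)$ is the scaling threshold that keeps the right-hand side compatible with the baseline $L^{1}$ bound of Lemma~\ref{leh.1}.

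The technical obstacle is the H\'enon weight: testing \eqref{1.stablecondition} directly with an exponential power of $u$ leaves a stray $|x|^a$ factor that does not pair cleanly with the nonlocal operator. To get around this, I would set $v:=u+a\log|x|$, so that $|x|^a e^u = e^v$; however, $\log|x|$ is not $s$-harmonic, so the Caffarelli--Silvestre extension $\bar v$ is \emph{not} simply $\bar u+a\log|x|$. This is precisely where Lemma~\ref{poissonlog} enters: the Poisson extension of $\log|\cdot|$ dominates $\log|x|$ pointwise in $\mathbb R^{n+1}_+$, which by Jensen's inequality converts traces of $e^{\beta v}$-type test functions into controllable integrals on the extension side. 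This is the key new ingredient that lets the standard Farina machinery go through for $a>0$.

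Concretely, I would fix a cutoff $\eta\in C_c^{\infty}(B_{3r}\setminus B_{r/2})$ with $\eta\equiv 1$ on $B_{2r}\setminus B_r$, $|\nabla\eta|\le Cr^{-1}$, and $r\ge 2R$, plug the test function $\varphi=e^{\beta v}\eta$ into \eqref{1.stablecondition} with $\beta$ chosen so that $2\beta+1$ equals the target exponent $p'$, and rewrite the Dirichlet form on the right via the extension identity \eqref{1.weak}. Expanding $|\nabla(e^{\beta\bar v}\bar\eta)|^2$, using the equation \eqref{fg-1} to eliminate $|\nabla u|^2$ terms in favor of $e^{(\beta+1)v}$ as in \cite{f2,f}, and applying Lemma~\ref{poissonlog} to control the cross terms produced by the $|x|^a$ weight, one arrives at the schematic bound
\[
\Bigl(1-\tfrac{\beta}{c_{\mathrm F}}\Bigr)\int_{B_{2r}\setminus B_r}e^{(2\beta+1)v}\,dx \;\le\; C(\beta)\int_{B_{3r}\setminus B_{r/2}}e^{pv}\bigl(|\nabla\eta|^2+\eta^2\bigr)\,dx \;+\;\text{(error terms)},
\]
valid precisely when $\beta$ stays below the Farina threshold (whence $2\beta+1<5$); the error terms are controlled by the finite-Morse-index a priori estimates collected in Appendix~A. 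Combined with $|\nabla\eta|^2\lesssim r^{-2}$ and the inductive hypothesis $\int e^{pv}\le Cr^{n-2ps}$, this delivers $\int e^{(2\beta+1)v}\le Cr^{n-2(2\beta+1)s}$, proving \eqref{est-p-1} after finitely many iteration steps.

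Once \eqref{est-p-1} is established the corollaries are bookkeeping. For \eqref{est-p-2} one observes that $B(x,|x|/2)\subset B_{2|x|}\setminus B_{|x|/2}$ and applies \eqref{est-p-1} on the two dyadic annuli that cover this set, using $|y|^a\sim|x|^a$ there. For \eqref{est-p-3} one decomposes $B_r\setminus B_{2R}=\bigcup_{k=0}^{\lfloor\log_2(r/R)\rfloor}\bigl(B_{2^{k+1}R}\setminus B_{2^kR}\bigr)$ and sums \eqref{est-p-1}; the strict inequality $p<n/(2s)$ keeps $n-2ps>0$, so the geometric sum is dominated by its last term and produces the claimed $r^{n-2ps}$ bound. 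The main obstacle throughout is the very first step---producing a \emph{closed} Moser-type inequality despite the nonlocality of $(-\Delta)^s$ and the log-singularity injected by the H\'enon weight---and it is exactly Lemma~\ref{poissonlog} that resolves this.
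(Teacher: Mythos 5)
Your proposal follows essentially the same route as the paper: a Farina--Moser iteration on the stability inequality transferred to the Caffarelli--Silvestre extension, with the base case supplied by Lemma~\ref{leh.1}, the H\'enon weight absorbed via Lemma~\ref{poissonlog} and Jensen's inequality (the paper keeps $\bar u$ and puts the weight $|x|^{pa/2}$ into the test function $\Phi_r$ rather than extending $v=u+a\log|x|$, but this is the same mechanism), and the same dyadic bookkeeping for (i) and (ii). The closed inequality you write schematically is exactly the paper's Lemma~\ref{lem-0.2} with $\alpha=p/2$, $1+2\alpha=p'<5$, together with the uniform-in-$t$ extension estimate \eqref{weighted}, so the argument is correct as outlined.
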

	\begin{proof}
By the Moser's interation, see Lemma \ref{lem-2.7} and Lemma \ref{lem-0.2} in Appendix A, one can show that $|x|^a e^{u}\in L^p_{\mathrm{loc}}(\B\setminus B_R)$ for every $p\in [1,5)$. To prove \eqref{est-p-1} we first show it holds for $p\in[0,1)$. Indeed, by the H\"older's inequality and Lemma \ref{leh.1} we get that for $p\in[0,1)$
		\begin{equation*}
		\begin{aligned}
		\int_{B_{2r}\setminus B_r}\left(|x|^a e^{u}\right)^p dx \leq \left(\int_{B_{2r}\setminus B_r}|x|^a e^u dx\right)^p\left(\int_{B_{2r}\setminus B_r}1 dx\right)^{1-p}\leq Cr^{n-2p s}.
		\end{aligned}
		\end{equation*}
		Next we claim that if
		\begin{align}
		\label{claim-2.31}
		\int_{B_{2r}\setminus B_{r}}\left(|x|^a e^{u}\right)^{p}dx\leq Cr^{n-2p s}\quad \text{for  }r>2R,
		\end{align}
		for some $\alpha\in (0,\min\{\frac{n}{4s},2\})$, then
		\begin{align}
		\label{claim-2.32}\int_{B_{2r}\setminus B_{r}}\left(|x|^a e^{u}\right)^{p+1}dx\leq Cr^{n-2(p+1) s}\quad\text{for }r>3R.
		\end{align}
		From \eqref{claim-2.31}, it is straightforward to show that
		\begin{equation}
		\label{3.claim}
		\int_{B_r\setminus B_{2R}}\left(|x|^a e^{u}\right)^{p}dx\leq Cr^{n-2p s}\quad \text{for  }r> 2R.
		\end{equation}
		Indeed, for  $r>2R$ of the form $r=2^{N_1}$ with some positive integer $N_1$,  and taking  $N_2$ to be the smallest integer such that $2^{N_2}\geq 2R$,  by  \eqref{claim-2.31} we deduce
		\begin{equation}
		\label{3.claim1}
		\begin{aligned}
		\int_{B_{2^{N_1}}\setminus B_{2R}}\left(|x|^a e^{u}\right)^{p}dx=
		&\left(\int_{B_{2^{N_2}}\setminus B_{2R}}+\sum_{\ell=1}^{N_1-N_2}\int_{B_{2^{N_2+\ell}}\setminus B_{2^{N_1+\ell}}}\right)\left(|x|^a e^{u}\right)^{p}dx\\
		\leq & ~C+ C\sum_{\ell=1}^{N_1-N_2}(2^{N_2+\ell})^{n-2p s}
		\leq  C2^{N_1(n-2p s)},
		\end{aligned}
		\end{equation}
		where we use $n-2p s>0$. Then, using the hypothesis \eqref{claim-2.31},   we derive the following decay estimate
		\begin{equation}
		\label{h.11}
		\begin{aligned}
		\int_{|y|\geq r}\frac{\left(|x|^a e^{u(x)}\right)^{p}}{|x|^{n+2s}}dx=~&
		\sum_{i=0}^\infty\int_{2^{i+1}r\geq |x|\geq 2^ir}\frac{|x|^{pa}e^{pu(x)}}{|x|^{n+2s}}dx\\
		\leq~&\frac{ C}{r^{(2p+2)s}}\sum_{i=0}^\infty\frac{1}{2^{(2p+2)si}}
		\leq Cr^{-(2p+2)s}.
		\end{aligned}
		\end{equation}
		Next, we shall show similar estimates as \eqref{h.11} hold for the extended function $\ou$. 
		For $r>3R$, $x\in B_{3r}\setminus B_{2r/3}$ and $t\in(0,2r)$,  we first apply the Jensen's inequality to establish the following 
\begin{equation}
\label{jensen}
\begin{aligned}
&\int_{\B\setminus B_{2R}} P(X,y)\left(|y|^a e^{u(y)}\right)^{p} dy + \int_{B_{2R}}P(X,y)dy\\
&= \int_{\B\setminus B_{2R}} P(X,y) e^{p (u(y)+a\log{|y|})}dy + \int_{B_{2R}}P(X,y)dy\\
&\geq e^{p\left(\int_{\B}P(X,y)(u(y)+a\log{|y|})dy
	-\int_{B_{2R}}P(X,y)(u(y)+a\log{|y|})dy\right) }\\
&\geq C e^{p\ou(x,t)} e^{p a \int_{\B}P(X,y)\log{|y|}}dy
\geq C |x|^{pa}e^{p\ou(x,t)},
\end{aligned}
\end{equation}
where we use 
\begin{equation*}
\int_{B_{2R}}\left(u(y)+a\log{|y|}\right) P(X,y)dy\leq C,
\end{equation*}
together with Lemma \ref{poissonlog} and $a>0$.		
We now give an estimate based on \eqref{3.claim}, \eqref{h.11} and \eqref{jensen} 
\begin{equation}
\label{weighted}
\begin{aligned}
&\int_{B_{3r}\setminus B_{2r/3}}|x|^{pa} e^{p\ou(x,t)}dx\\ &\leq C\int_{B_{3r}\setminus B_{2r/3}} \int_{\B\setminus B_{2R}}P(X,y)|y|^{p a} e^{p u(y)}dy dx+C\int_{B_{3r}\setminus B_{2r/3}}\int_{B_{2R}}P(X,y)dydx\\
&\leq C\int_{B_{3r}}\left(\int_{B_{4r}\setminus B_{2R}}
+\int_{\B\setminus B_{4r}}\right)
P(X,y)|y|^{p a} e^{p u(y)} dy dx+C R^n\\
&\leq C\int_{B_{4r}\setminus B_{2R}}|y|^{p a} e^{p u(y)} \left(\int_{B_{3r}}P(X,y)dx\right)dy+ C \int_{B_{3r}} \frac{t^{2s}}{r^{(2p+2)s}} dx +C R^n\\
&\leq  C\int_{B_{4r}\setminus B_{2R}}|y|^{pa} e^{p u(y)}dy+ C r^{n-2p s} +C R^n\\
&\leq Cr^{n-2p s} + C R^n \leq C r^{n-2ps},
\end{aligned}
\end{equation}		
where we use $n-2p s>0$.

Now,  we fix non-negative smooth functions $\vp$ on $\B$ and $\eta$ on $[0,\infty)$ such that $\vp(x)=1$ on $B_{2}\setminus B_1$, $\vp(x)=0$ on $B_{2/3}\cup B_{3}^c$; $\eta(t)=1$ on $[0,1]$, $\eta(t)=0$ on $[2,\infty)$. For $r>0$, we set $\Phi_r(x,t)=|x|^{\frac{p}{2}a}\vp(\frac xr)\eta(\frac tr)$. Then  $\Phi_r$ is a good test function in Lemma \ref{lem-0.2}. Therefore, as  $|\nabla \Phi_r |\leq  \frac Cr |x|^{\frac{p}{2}a}$, by \eqref{weighted} we obtain
		\begin{equation*}
		\begin{aligned}
		\int_{\R}t^{1-2s}e^{p\ou}|\nabla  \Phi_r |^2dxdt
		&\leq Cr^{-2}\int_0^{2r} t^{1-2s}\int_{B_{3r}\setminus B_{2r/3}}\left(|x|^a e^{\ou(x,t)}\right)^{p}dxdt\\
		&\leq Cr^{n-2-2p s}\int_0^{2r}t^{1-2s} dt\leq Cr^{n-2s(1+p)}.
		\end{aligned}
		\end{equation*}
Similarly, we have
		\begin{equation*}
		\left|\int_{\R}e^{p\ou} \nabla\cdot [t^{1-2s}\nabla  \Phi_r^2]dxdt\right|\leq Cr^{n-2s(1+p)}.
		\end{equation*}		
		Then,  \eqref{claim-2.32} follows from Lemma \ref{lem-0.2}. Repeating the above arguments finitely many times we get \eqref{est-p-1}, while \eqref{est-p-2} follows immediately as $B(x,|x|/2)\subset B_{2r}\setminus B_{r/2}$ with $r=|x|$.
		
		In spirit of the estimate   \eqref{3.claim1},  one can obtain  the second conclusion \eqref{est-p-3}.  This  finishes the  proof.
	\end{proof}

	\section{Monotonicity Formula and Blow-down Analysis}\label{Section3}
In this section, we provide a proof of our main result. We first present a monotonicity formula for solutions to the extension problem and derive several energy estimates to the finite Morse index solutions by the results stated in Section \ref{Section2}. Then, we show the non-existence of stable homogeneous solutions under the condition \eqref{1.stable1}.  Finally, we prove that the limit of the blow-down sequence is homogeneous and stable in $\mathbb{R}^n\setminus\{0\}$, which gives a contradiction and it proves Theorem \ref{th1.1}.
\subsection{Monotonicity Formula}
	In this subsection  we  give a proof of Theorem \ref{th4.2}.
	\begin{proof}[Proof of Theorem \ref{th4.2}.]

We prove the theorem  for $u$  sufficiently smooth and give the necessary details without the smoothness assumption in the end of  proof. Without loss of generality, we assume that $x_0=0$. Set
		\begin{equation*}
		\begin{aligned}
		E_1(\ou,\lambda)=\lambda^{2s-n}\Bigg(\frac12\int_{B^{n+1}_{\l}\cap\R}t^{1-2s}|\nabla\ou|^2 dxdt-\kappa_s\int_{B^{n+1}_{\l}\cap\partial\R}|x|^a e^{\ou}dx\Bigg).
		\end{aligned}
		\end{equation*}
		Define
		\begin{equation*}
		\ou^\lambda (X)=\ou(\lambda X)+(2s+a)\log\lambda.
		\end{equation*}
		Then
		\begin{equation}
		\label{3.relation}
		E_1(\ou,\lambda)=E_1(\ou^{\lambda},1).
		\end{equation}
		Differentiating $\ou^\l$ with respect to $\lambda $, we have
		\begin{equation*}
		\lambda\frac{\partial\ou^{\l}}{\partial\lambda}=r\frac{\partial \ou^{\l}}{\partial r}+2s+a.
		\end{equation*}
		Differentiating the right-hand side of \eqref{3.relation}, we find
		\begin{equation}
		\label{3.diff}
		\begin{aligned}
		\frac{\partial E_1(\ou,\lambda)}{\partial\lambda}=~&\int_{B^{n+1}_1\cap\R}t^{1-2s}\nabla \ou^{\lambda}\nabla \frac{\partial\ou^{\l}}{\partial\lambda}dxdt
		-\kappa_s\int_{B^{n+1}_1\cap\partial\R}e^{\ou^{\lambda}}\frac{\partial\ou^{\lambda}}{\partial\l}dx\\
		=~&\int_{\partial B^{n+1}_1\cap\R}t^{1-2s}\frac{\partial\ou^\lambda}{\partial r}\frac{\partial \ou^\lambda}{\partial\l} d\sigma\\
		=~&\lambda \int_{\partial B^{n+1}_1\cap\R}\left(t^{1-2s}\left(\frac{\partial\ou^\lambda}{\partial\l}\right)^2
		-(2s+a)t^{1-2s}\frac{\partial \ou^\lambda}{\partial\l}\right)d\sigma.
		\end{aligned}
		\end{equation}
		We notice that
		\begin{equation}
		\label{3.m2}
		E(\ou,\lambda)=E(\ou^\lambda,1)=E_1(\ou^\lambda,1)+(2s+a)\int_{\partial B^{n+1}_1\cap\R}t^{1-2s}\ou^\lambda d\sigma.
		\end{equation}
		From \eqref{3.diff} and \eqref{3.m2}, we get
		\begin{equation*}
		\begin{aligned}
		\frac{\partial E(\ou,\lambda)}{\partial\lambda}=~&\lambda \int_{\partial B^{n+1}_1\cap\R}t^{1-2s}\left(\frac{\partial\ou^\lambda}{\partial\l}\right)^2d\sigma\\
		=~&\lambda^{2s-n}\int_{\partial B^{n+1}_{\l}\cap\R}t^{1-2s}\left(\frac{\partial\ou}{\partial r}+\frac{2s+a}{r}\right)^2d\sigma.
		\end{aligned}
		\end{equation*}
		This proves Theorem \ref{th4.2} when the function $u^\l$ is smooth.  For general $u^\l$ we set $u^\lambda_{\e}:=u^\lambda*\rho_{\e}$, $(\rho_{\e})_{\e>0}$ are the standard mollifiers, it is straightforward to see that $u^\lambda_{\e}$ satisfies $\s u^\lambda_{\e}=(|x|^ae^{u^\lambda})*\rho_{\e}$. Then,  we consider $E(\ou^\lambda_{\e},1)$. Following the same computations as above, we could get
\begin{equation*}
\begin{aligned}
\frac{\partial E(\ou^\lambda_{\e},1)}{\partial\lambda}~=~&\l\int_{\partial B^{n+1}_1\cap\R}t^{1-2s}\left(\frac{\partial\ou^\l_\e}{\partial \l}\right)^2d\sigma\\
&+\kappa_s\int_{B_1}\left((|x|^ae^{u^\l})*\rho_{\e}-|x|^ae^{u_\e^\l}\right)\frac{\partial u_\e^\l}{\partial\l} dx,
\end{aligned}
\end{equation*}
where the second term on the right hand side could be controlled by assuming $u\in W^{1,2}_{\mathrm{loc}}(\B)$ and $e^{u}\in L^2_{\mathrm{loc}}(\B)$, and it converges to zero as $\e$ tends  to $0$. Hence we finish the proof.
\end{proof}
	
\subsection{Energy Estimates} We begin by stating the following lemma for the third term in the monotonicity formula \eqref{1.monotonicity}.
		
		\begin{lemma}
		\label{lef.2}
		Let $\ou^\l$ be the $s$-harmonic extension of $u^\l$, then
		\begin{equation}
		\label{3.boundary-1}
		\int_{\partial B_1^{n+1}\cap\R}t^{1-2s}\ou^\l(X)d\sigma=c_sc_\l+O(1),
		\end{equation}
		and
		\begin{equation}
		\label{3.area-1}
		\int_{B_r^{n+1}\cap\R}t^{1-2s}\ou^\l(X)dxdt=\omega_sc_\l r^{n+2-2s}+C(r),
		\end{equation}
		where $c_\l$ is defined in \eqref{f.11}, $c_s$ and $\omega_s$ are  positive finite numbers given by
		$$c_s:=\int_{\partial B_1^{n+1}\cap\R}t^{1-2s}d\sigma\quad\mathrm{and}\quad \omega_s=\int_{B_1^{n+1}\cap\R}t^{1-2s}dx dt.$$
	    \end{lemma}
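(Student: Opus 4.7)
The plan is to invoke the representation formula of Lemma~\ref{u-representation}, which expresses $u^\l(x) = w^\l(x) + c_\l$ with
$$w^\l(x) := c(n,s)\int_{\B}\left(\frac{1}{|x-y|^{n-2s}} - \frac{1}{(1+|y|)^{n-2s}}\right)|y|^a e^{u^\l(y)}\,dy.$$
Since $\int_{\B} P(X,y)\,dy = 1$, the $s$-harmonic extension decomposes as $\ou^\l(X) = \overline{w^\l}(X) + c_\l$, and the constant piece contributes exactly $c_sc_\l$ in \eqref{3.boundary-1} and $\omega_sc_\l r^{n+2-2s}$ in \eqref{3.area-1}. Thus the lemma reduces to showing
$$\left|\int_{\partial B_1^{n+1}\cap\R} t^{1-2s}\,\overline{w^\l}(X)\,d\sigma\right|\le C, \qquad \left|\int_{B_r^{n+1}\cap\R} t^{1-2s}\,\overline{w^\l}(X)\,dX\right|\le C(r),$$
uniformly in $\l\ge 1$.

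Next, I would apply Fubini to write
$$\overline{w^\l}(X) = c(n,s)\int_{\B} K(X,z)\,|z|^a e^{u^\l(z)}\,dz, \qquad K(X,z) := \int_{\B} P(X,y)\left(\frac{1}{|y-z|^{n-2s}} - \frac{1}{(1+|z|)^{n-2s}}\right)dy,$$
and interchange orders once more, reducing the task to estimating $\int t^{1-2s}K(X,z)\,d\sigma(X)$ and $\int t^{1-2s}K(X,z)\,dX$ pointwise in $z$, then integrating against $|z|^a e^{u^\l(z)}$. I would then split the $z$-integration into a near-field piece $|z|\le 2R$ and a far-field piece $|z|>2R$, with $R$ a large fixed radius.

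For the far field, the three-piece decomposition $\B = A_1\cup A_2\cup A_3$ used to establish \eqref{f.4} in Lemma~\ref{lef.1} yields the pointwise decay $|K(X,z)|\le C|z|^{-(n-2s+\gamma)}$ for some $\gamma>0$ and all $X$ in a fixed bounded set. Combining this with the dyadic summation of \eqref{h.11} and the scale-invariant bound \eqref{f.1} makes the far-field contribution to both integrals bounded uniformly in $\l$. For the near field, the Riesz-type Poisson estimate $\int_{\B} P(X,y)|y-z|^{2s-n}\,dy \le C|X-(z,0)|^{2s-n}$, combined with the elementary computation
$$\int_{B_r^{n+1}\cap\R}\frac{t^{1-2s}}{|X-(z,0)|^{n-2s}}\,dX \le Cr^{2}$$
(uniformly in $z$, as a direct polar-coordinates calculation shows the singularity at $X=(z,0)$ becomes integrable after multiplication by $t^{1-2s}$), reduces the near-field contribution to controlling $\int_{|z|\le 2R}|z|^a e^{u^\l(z)}\,dz$, which is uniformly bounded in $\l$ by Lemma~\ref{leh.1}. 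An analogous polar-coordinates calculation on $\partial B_1^{n+1}\cap\R$ handles the boundary integral in \eqref{3.boundary-1}.

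The chief obstacle is uniformity in $\l$: both $|z|^a e^{u^\l(z)}$ and its Poisson extension could a priori concentrate near the origin as $\l\to\infty$. This is precisely why Lemma~\ref{leh.1}, Proposition~\ref{prop-2.6}, and the scaling identity \eqref{f.1} were established with constants independent of $\l$; equipped with those uniform bounds, assembling the near- and far-field estimates produces \eqref{3.boundary-1} and \eqref{3.area-1}.
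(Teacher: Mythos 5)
Your proposal is correct and shares the overall strategy of the paper's proof (extract the constant $c_\l$ via $\int P\,dy = 1$, then bound the contribution of $v^\l$), but the two proofs diverge on how the $v^\l$-contribution is controlled. The paper first splits the Poisson integral of $v^\l$ according to $|z| \lessgtr 2$, handles the far piece by pairing $\int_{\partial B_1^{n+1}\cap\R} t^{1-2s}P(X,z)\,d\sigma \lesssim (1+|z|^{n+2s})^{-1}$ with the $L_s$-bound \eqref{f.4} on $v^\l$, and handles the near piece by inserting the truncated Riesz representation \eqref{f.5} and proving the kernel bound \eqref{f.14} through a hands-on decomposition plus a stereographic-projection computation \eqref{f.15}--\eqref{f.17}. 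You instead perform the double Fubini rearrangement at the outset, writing $\overline{v^\l}(X) = c(n,s)\int K(X,z)\,|z|^a e^{u^\l(z)}\,dz$, and only afterwards split the source variable $z$ into near/far. For the far field this recycles essentially the same $A_1\cup A_2\cup A_3$ estimates (with $(1+|y|^{n+2s})^{-1}$ replaced by $t^{1-2s}P(X,y)$ integrated over a bounded set, which behaves comparably), together with the dyadic summation via \eqref{f.1}, so that part is equivalent in substance. For the near field your route is genuinely cleaner: observing that the $s$-harmonic extension of the Riesz kernel satisfies $\int P(X,y)|y-z|^{2s-n}\,dy \lesssim |X-(z,0)|^{2s-n}$, and that $t^{1-2s}|X-(z,0)|^{2s-n}$ has an integrable singularity on $\partial B_1^{n+1}\cap\R$ (or on $B_r^{n+1}\cap\R$, giving the explicit $r^2$ bound), replaces the paper's stereographic-projection computation \eqref{f.16}--\eqref{f.17} with a single polar-coordinates calculation. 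Two details worth making explicit if you write this up: the far-field pointwise bound on $K(X,z)$ picks up a factor $t^{2s}|z|^{1-2s}$ from the $A_1$ piece when $2s<1$, so "uniformly in $X$ bounded" should be read as "after reabsorbing the $t$-factor into $t^{1-2s}\,d\sigma$ (resp.\ $t^{1-2s}\,dX$)"; and for the near field one should cite the $\lambda$-uniform version \eqref{f.1} rather than Lemma \ref{leh.1} alone, since uniformity in $\lambda$ is exactly what is at stake.
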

	\begin{proof}
	Since the proof of \eqref{3.boundary-1} and \eqref{3.area-1} are almost the same, we  only provide the details for the former one.	Using the Poisson formula and \eqref{f.4},  we have
		\begin{equation*}
		\begin{aligned}
		&\int_{\partial B^{n+1}_1\cap\R}t^{1-2s}\ou^\l(X)d\sigma=\int_{\partial B^{n+1}_1\cap\R}t^{1-2s}\int_{\B}P(X,z)u^\l(z)dzd\sigma\\
		&=\int_{\partial B^{n+1}_1\cap\R}t^{1-2s}c_\l d\sigma+\int_{\partial B^{n+1}_1\cap\R}t^{1-2s}\int_{\B}P(X,z)v^\l(z)dz\\
		&=\int_{\partial B^{n+1}_1\cap\R}t^{1-2s}c_\l d\sigma+\int_{\partial B^{n+1}_1\cap\R}t^{1-2s}\left(\int_{\B\setminus B_2}+\int_{B_2}\right)P(X,z)v^\l(z)dz\\
		&= c_sc_\l+O(1)+\int_{\partial B_1^{n+1}\cap\R}t^{1-2s}\int_{B_2}P(X,z)v^\l(z)dzd\sigma.
		\end{aligned}
		\end{equation*}
		We denote the last term in the above equation by $\Xi$. Concerning the term $\Xi$, we have that
		\begin{equation}
		\label{II}
		\begin{aligned}
		|\Xi|&\leq C+C\int_{\partial B^{n+1}_1\cap\R}t^{1-2s}\int_{|z|\leq 2}P(X,z)\int_{|y|\leq 4}\frac{|y|^a e^{u^\l(y)}}{|z-y|^{n-2s}}dydzd\sigma\\
		&\leq C+C\int_{|y|\leq 4}|y|^a e^{u^\l(y)}\int_{\partial B_{1}^{n+1}\cap\R}\int_{|z|\leq 2}t^{1-2s}P(X,z)\frac{1}{|z-y|^{n-2s}}dzd\sigma dy, 
		\end{aligned}
		\end{equation}
		where we use \eqref{f.1} and \eqref{f.5}. To show that $\Xi$ is uniformly bounded with respect to $\l$, it suffices to establish the following claim
		\begin{equation}
		\label{f.14}
		\int_{\partial B^{n+1}_1\cap\R}\int_{B_4}t^{1-2s}P(X,z)\frac{1}{|y-z|^{n-2s}}dzd\sigma\leq C\quad \mbox{for every}~y\in\B.
		\end{equation}
		Indeed, for $x\neq y$ we set $\rho=\frac12|x-y|$. Then we have
		\begin{equation}
		\label{f.15}
		\begin{aligned}
		&\int_{B_4}\frac{t}{|(x-z,t)|^{n+2s}|y-z|^{n-2s}}dz\\
		&\leq\left(\int_{B(y,\rho)}+\int_{B_4\setminus  B(y,\rho)}\right)\frac{t}{|(x-z,t)|^{n+2s}|y-z|^{n-2s}}dz\\
		&\leq C\frac{t}{(\rho+t)^{n+2s}}
		\int_{B(y,\rho)}\frac{1}{|y-z|^{n-2s}}dz
		+\frac{1}{\rho^{n-2s}}\int_{B_4\setminus  B(y,\rho)}\frac{t}{|(x-z,t)|^{n+2s}}dz\\
		&\leq C\left(\frac{t\rho^{2s}}{(\rho+t)^{n+2s}}
		+\frac{t^{1-2s}}{\rho^{n-2s}}\right)\leq C\left(\frac{1}{\rho^{n-1}}+\frac{t^{1-2s}}{\rho^{n-2s}}\right).
		\end{aligned}
		\end{equation}
We use the stereo-graphic projection $(x,t)\to\xi$ from $\partial B_1^{n+1}\cap\R\to\mathbb{R}^n\setminus B_1$, i.e.,
		$$\xi\to(x,t)=\left(\frac{2\xi}{1+|\xi|^2},~\frac{|\xi|^2-1}{1+|\xi|^2}\right).$$
		$$(x,t)\to\xi=\frac{x}{1-t}.$$
		Then, $\rho=\frac12\left|\frac{2\xi}{1+|\xi|^2}-y\right|$ and it follows that
		\begin{equation}
		\label{f.16}
		\int_{\partial B_1^{n+1}\cap\R}\frac{1}{\rho^{n-1}}d\sigma
		\leq\int_{|\xi|\geq 1}\frac{1}{\rho^{n-1}}\frac{1}{(1+|\xi|^2)^n}d\xi\leq C,
		\end{equation}
		and
		\begin{equation}
		\label{f.17}
		\begin{aligned}
		\int_{\partial B_1^{n+1}\cap\R}\frac{t^{1-2s}}{\rho^{n-2s}}d\sigma
		\leq \int_{|\xi|\geq1}\frac{t^{1-2s}}{\rho^{n-2s}}
		\frac{1}{(1+|\xi|^2)^n}d\xi\leq C.
		\end{aligned}
		\end{equation}
		From \eqref{II}-\eqref{f.17},  the proof is completed.
	\end{proof}

In order to estimate the first term in the monotonicity formula \eqref{1.monotonicity}  we need the following result.
	
	\begin{lemma}
		\label{lef.3}
		There exists a constant $C>0$ such that
		$$\int_{B_r}|(-\D)^\frac s2u^\lambda(x)|^2dx\leq Cr^{n-2s}\quad\text{for~ }r\geq1,~\ \lambda\geq1.$$
	\end{lemma}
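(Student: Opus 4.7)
The plan is to apply $(-\Delta)^{s/2}$ directly to the integral representation \eqref{f.11}. Since the ``correction term'' $(1+|y|)^{-(n-2s)}$ depends only on $y$ and $c_\lambda$ is a constant, both are annihilated by $(-\Delta)^{s/2}_x$. Combining this with the Riesz-potential identity $(-\Delta)^{s/2}(|x|^{-(n-2s)}) = C_{n,s}\,|x|^{-(n-s)}$ (a direct Fourier computation, equivalent to $(-\Delta)^{s/2}\circ I_{2s} = I_s$) yields
\begin{equation*}
(-\Delta)^{s/2}u^\lambda(x) \;=\; \widetilde{C}(n,s)\int_{\mathbb{R}^n}\frac{|y|^a e^{u^\lambda(y)}}{|x-y|^{n-s}}\,dy \;=:\; w(x).
\end{equation*}
The lemma is thus reduced to proving $\|w\|_{L^2(B_r)}^2\leq C\,r^{n-2s}$ uniformly in $r,\lambda\geq 1$. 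I would split $w=w_1+w_2$ according to $|y|\leq 2r$ and $|y|>2r$ and treat the two pieces separately.

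For the far piece $w_2$, when $x\in B_r$ and $|y|>2r$ one has $|x-y|\geq |y|/2$. A dyadic decomposition over annuli $\{2^k r\leq |y|<2^{k+1}r\}$, combined with the scale-invariant $L^1$-bound \eqref{f.1}, gives
\begin{equation*}
|w_2(x)|\;\leq\; C\sum_{k\geq 1}(2^k r)^{-(n-s)}\cdot (2^{k+1}r)^{n-2s}\;\leq\; C\,r^{-s},
\end{equation*}
whence $\int_{B_r}w_2^2\,dx\leq C\,r^{n-2s}$. For the near piece, observe that $w_1$ is (a constant multiple of) the Riesz potential $I_s$ of $f\chi_{B_{2r}}$ with $f:=|y|^a e^{u^\lambda}$. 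Applying the Hardy--Littlewood--Sobolev inequality with exponent $p=2n/(n+2s)$---which satisfies $1<p<2$ because $n>2s$, and lies in the Moser-iteration range $[1,\min\{5,1+\tfrac{n}{2s}\})$---produces $\|w_1\|_{L^2(\mathbb{R}^n)}\leq C\,\|f\|_{L^p(B_{2r})}$.

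The $L^p$-norm of $f$ is controlled as follows. For the outer annulus $B_{2r}\setminus B_{2R}$, Proposition \ref{prop-2.6} applied to $u^\lambda$ (which is stable outside $B_{R/\lambda}\subset B_R$) yields $\int_{B_{2r}\setminus B_{2R}}f^p\,dx\leq C\,r^{n-2ps}$, with the constant independent of $\lambda\geq 1$ by a rescaling $z=\lambda x$ argument in the spirit of \eqref{f.1}. For the contribution near the origin, the same change of variables gives
\begin{equation*}
\int_{B_{R/\lambda}}f^p\,dx \;=\; \lambda^{2ps-n}\int_{B_R}(|z|^a e^{u})^p\,dz \;\leq\; C,
\end{equation*}
since $|x|^a e^u\in L^2_{\mathrm{loc}}$ by assumption and $p\leq 2$, and since $\lambda^{2ps-n}\leq 1$ because $2ps<n$. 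Combining these bounds gives $\|f\|_{L^p(B_{2r})}^p\leq C\,r^{n-2ps}$, hence
\begin{equation*}
\int_{B_r}w_1^2\,dx\;\leq\;\|w_1\|_{L^2(\mathbb{R}^n)}^2\;\leq\;C\,r^{2(n-2ps)/p}\;=\;C\,r^{n-2s},
\end{equation*}
by direct algebra since $2n/p=n+2s$. The main technical obstacle is ensuring the constants in Proposition \ref{prop-2.6} are uniform in $\lambda\geq 1$, which follows from the same rescaling as \eqref{f.1}; a secondary point is justifying the passage of $(-\Delta)^{s/2}$ under the integral sign in \eqref{f.11}, which can be handled by mollification together with the integrability estimates already established in Section \ref{Section2}.
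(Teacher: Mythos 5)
Your proposal is correct in its essentials and takes a genuinely different route from the paper for the local piece. Both proofs begin by differentiating the representation formula \eqref{f.11} to write $(-\Delta)^{s/2}u^\lambda$ as the Riesz potential $I_s$ of $|y|^a e^{u^\lambda}$, and both handle the far contribution ($|y|\geq 2r$, your $w_2$, the paper's $I_2$) via the same dyadic decomposition and the scale-invariant $L^1$ bound \eqref{f.1}. Where you diverge is in the near contribution: you invoke Hardy--Littlewood--Sobolev at the sharp exponent $p=\frac{2n}{n+2s}$ to map $L^p(B_{2r})\to L^2$, then feed in Proposition \ref{prop-2.6} at this subquadratic $p$; the paper instead makes a hands-on estimate, decomposing $I_1$ into pieces according to proximity to $x$, applying Cauchy--Schwarz pointwise on $B(x,|x|/2)$, and then a Fubini swap to land on the weighted integral $\int |y|^{2(s+a)}e^{2u}\,dy$, for which it invokes Proposition \ref{prop-2.6} at $p=2$. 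Your route is cleaner once one is willing to quote HLS, and the exponent $p=\frac{2n}{n+2s}$ makes the final algebra $2n/p-4s=n-2s$ fall out directly; the paper's route is more elementary and self-contained, essentially re-proving the relevant case of HLS by hand.

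Two points deserve attention. First, you work with $u^\lambda$ directly, which forces you to argue that the constants in Proposition \ref{prop-2.6} are uniform over $\lambda\geq 1$; you flag this, and it does follow from the change of variables $z=\lambda x$, but the paper avoids the issue altogether by proving the bound only for $\lambda=1$ and then observing $\int_{B_r}|(-\Delta)^{s/2}u^\lambda|^2\,dx=\lambda^{2s-n}\int_{B_{\lambda r}}|(-\Delta)^{s/2}u|^2\,dz\leq C(\lambda r)^{n-2s}\lambda^{2s-n}=Cr^{n-2s}$, which is more economical. Second, as written your decomposition of $B_{2r}$ leaves a gap: you estimate $\int_{B_{2r}\setminus B_{2R}}f^p$ and $\int_{B_{R/\lambda}}f^p$, but for $\lambda\geq 1$ the annulus $B_{2R}\setminus B_{R/\lambda}$ falls through the cracks. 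This is easily repaired, for instance by running the outer estimate on $B_{2r}\setminus B_{2R/\lambda}$ (after the rescaling $z=\lambda x$ this is exactly \eqref{est-p-3} for $u$ on $B_{2\lambda r}\setminus B_{2R}$, so the constant is uniform) and then the inner one on $B_{2R/\lambda}$, but as stated the split is incomplete.
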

	\begin{proof}
		We  prove the lemma for $\lambda=1$. The general case follows from a  rescaling argument. From \eqref{f.11}, we know that
		$$(-\D)^\frac s2u(x)=C\int_{\B}\frac{|y|^a e^{u(y)}}{|x-y|^{n-s}}dy, $$
		in the sense of distribution. Suppose $u$ is stable in $\B\setminus B_R$, we decompose $B_{r}$ as the union of $B_{4R}$ and $B_{r}\setminus B_{4R}$ for $r\gg R.$ Since  $u\in\dot H_{\mathrm{loc}}^s(\B)$,  we get
		\begin{equation}
		\label{4.4.0}
		\int_{B_{4R}}|(-\D)^\frac{s}{2} u|^2dx\leq C(R).
		\end{equation}
		In $B_r\setminus B_{4R}$, we have
		\begin{equation}
		\label{4.4.1}
		\begin{aligned}
		|(-\D)^\frac s2u(x)| &\leq C\left(\int_{B_{2R}}    +\int_{B_{2r}\setminus B_{2R}}+\int_{\B\setminus B_{2r}}\right)
		\frac{|y|^a e^{u(y)}}{|x-y|^{n-s}}dy\\
		&\leq  \frac{C}{|x|^{n-s}}+C\int_{B_{2r}\setminus B_{2R}}\frac{|y|^a e^{u(y)}}{|x-y|^{n-s}}dy+ C\int_{\B\setminus B_{2r}}
		\frac{|y|^a e^{u(y)}}{|y|^{n-s}}dy \\
		&=:C\left(\frac{1}{|x|^{n-s}}+I_1(x)+I_2 \right) .
		\end{aligned}
		\end{equation}
		For the last term, using \eqref{h.7} we get
		\begin{equation}
		\label{I2}
		\begin{aligned}
		I_2=\sum_{k=1}^\infty \int_{r2^k\leq|x|\leq r2^{k+1}}\frac{|y|^a e^{u(y)}}{|y|^{n-s}}dy  \leq
		C\sum_{k=1}^\infty \frac{(2^{k+1}r)^{n-2s}}{(2^kr)^{n-s}}\leq \frac{C}{r^{s}}.
		\end{aligned}
		\end{equation}
		Therefore,
		\begin{equation}
		\label{4.4.2}
		\int_{B_r}I_2^2dx\leq Cr^{n-2s}.
		\end{equation}
		For the second term $I_1(x)$, we make a further decomposition
		\begin{equation}
		\label{newpf}
		\begin{aligned}
		I_1 &= \left(\int_{(B_{2|x|}\setminus B_{2R})\cap B(x,\frac{|x|}{2})}+\int_{(B_{2|x|}\setminus B_{2R})\setminus B(x,\frac{|x|}{2})}+\int_{B_{2r}\setminus B_{2|x|}}\right) \frac{|y|^a e^{u(y)}dy}{|x-y|^{n-s}}\\
		&= I_{1,1}+ I_{1,2}+ I_{1,3}.
		\end{aligned}
		\end{equation}
For the terms $I_{1,2}$ and $I_{1,3}$, by \eqref{f.1} and \eqref{I2} we have
\begin{equation}
\label{I13-I12}
I_{1,2}+I_{1,3}\leq \frac{C}{|x|^s}.
\end{equation}		
While for the left term $I_{1,1}$, we use Proposition \ref{prop-2.6} together with the H\"older's inequality to get the following estimates
\begin{equation}
\label{newpfHolder}
\begin{aligned}
\left(\int_{B(x,\frac{|x|}{2})}\frac{|y|^a e^{u(y)}}{|x-y|^{n-s}}dy\right)^2 &\leq \int_{B(x,\frac{|x|}{2})}\frac{dy}{|x-y|^{n-s}} \int_{B(x,\frac{|x|}{2})}\frac{|y|^{2a} e^{2u(y)}}{|x-y|^{n-s}}dy\\
&\leq C|x|^s \int_{B(x,\frac{|x|}{2})}\frac{|y|^{2a} e^{2u(y)}}{|x-y|^{n-s}}dy.
\end{aligned}
\end{equation}
Combining \eqref{newpf}-\eqref{newpfHolder}, we conclude that
\begin{equation}
\label{4.4.3}
\begin{aligned}
\int_{B_{r}\setminus B_{4R}} I^2_1 dx &\leq Cr^{n-2s} + C\int_{B_r\setminus B_{4R}}|x|^s \int_{B(x,\frac{|x|}{2})}\frac{|y|^{2a} e^{2u(y)}}{|x-y|^{n-s}}dy dx\\
&\leq Cr^{n-2s} + C \int_{B_{2r}\setminus B_{2R}} |y|^{2a} e^{2u(y)}\left(\int_{B(\frac{4y}{3},\frac{2|y|}{3})}\frac{|x|^s}{|x-y|^{n-s}}dx\right)dy\\
&\leq C r^{n-2s} + C\int_{B_{3r/2}\setminus B_{2R}}|y|^{2(s+a)} e^{2u(y)}dy\leq C r^{n-2s},
\end{aligned}
\end{equation}
where the last inequality follows from the same argument in proving \eqref{3.claim1} and the fact that $n>2s$.

From \eqref{4.4.1}, \eqref{4.4.2} and \eqref{4.4.3}, we deduce that
		\begin{equation}
		\label{4.4.4}
		\int_{B_r\setminus B_{4R}}|(-\D)^\frac{s}{2} u|^2dx\leq C r^{n-2s}.
		\end{equation}
Then,  the lemma follows from \eqref{4.4.0}, \eqref{4.4.4} and $n>2s$.
	\end{proof}

	We use Lemma \ref{lef.3} to prove the following estimate.
	\begin{lemma} There exists a constant $C$ such that
		\label{lef.4}
		\begin{equation}
		\label{f.24}
		\int_{B^{n+1}_r\cap \R}t^{1-2s}|\nabla \ou^\l(X)|^2dxdt\leq C(r),\quad \l\geq1.
		\end{equation}
	\end{lemma}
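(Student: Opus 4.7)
The plan is to test the weak formulation \eqref{1.weak} against $\Phi=(\ou^\lambda-c_\lambda)\eta^2$, where $\eta\in C_c^\infty(\mathbb{R}^{n+1})$ is a radial cutoff with $\eta\equiv 1$ on $B^{n+1}_r$ and $\mathrm{supp}\,\eta\subset B^{n+1}_{2r}$, and $c_\lambda$ is the constant from Lemma \ref{u-representation}. Since the $s$-harmonic extension of a constant equals that constant, $\nabla\ou^\lambda=\nabla\overline{v^\lambda}$ with $v^\lambda:=u^\lambda-c_\lambda$, and the substitution produces the identity
\begin{equation*}
\int_{\R}t^{1-2s}|\nabla\ou^\lambda|^2\eta^2\,dxdt = -2\int_{\R}t^{1-2s}\,\overline{v^\lambda}\,\eta\,\nabla\ou^\lambda\cdot\nabla\eta\,dxdt + \kappa_s\int_{\B}|x|^a e^{u^\lambda}v^\lambda\,\eta^2(x,0)\,dx.
\end{equation*}
A Cauchy--Schwarz/Young absorption of the mixed-gradient term into the left-hand side reduces the proof to two uniform-in-$\lambda$ bounds: the interior weighted $L^2$ bound $\int_{B^{n+1}_{2r}\cap\R}t^{1-2s}(\overline{v^\lambda})^2|\nabla\eta|^2\,dxdt\leq C(r)$ and the boundary bound $\bigl|\int_{B_{2r}}|x|^a e^{u^\lambda}v^\lambda\,dx\bigr|\leq C(r)$.

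For the boundary integral, choosing $R$ with $2R\geq 4r$ in \eqref{f.5} yields $|v^\lambda(x)|\leq C(r)+c(n,s)\int_{B_{4r}}|x-y|^{2s-n}|y|^a e^{u^\lambda(y)}\,dy$ for $|x|\leq 2r$. Since $n>2s$, Proposition \ref{prop-2.6} supplies some $p\in(1,n/(2s))$ within its admissible range with $\bigl\|\,|\cdot|^a e^{u^\lambda}\bigr\|_{L^p(B_{4r})}\leq C(r)$ uniformly in $\lambda\geq 1$. The Hardy--Littlewood--Sobolev inequality then gives $\|v^\lambda\|_{L^q(B_{2r})}\leq C(r)$ with $1/q=1/p-2s/n$, and H\"older's inequality against a slightly larger Lebesgue norm of $|x|^a e^{u^\lambda}$ (again from Proposition \ref{prop-2.6}) closes this term.

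For the interior term, I would split the Poisson representation of $\overline{v^\lambda}$ into near-field ($|y|\leq 4r$) and far-field ($|y|>4r$) contributions. The near-field piece is controlled slice-wise by Young's convolution inequality using $\|P(\cdot,t)\|_{L^1(\B)}=1$, combined with the local $L^q$ bound on $v^\lambda$ and a H\"older estimate on the bounded set $B_{2r}$; choosing $p$ sufficiently close to $n/(2s)$ gives $q>2$, which is enough. For the far-field piece, $(x,t)\in B^{n+1}_{2r}\cap\R$ and $|y|>4r$ imply $P(X,y)\leq Ct^{2s}(1+|y|)^{-(n+2s)}$, so Lemma \ref{lef.1} together with \eqref{f.4} yields $|\overline{v^\lambda}_{\mathrm{far}}(X)|\leq Ct^{2s}$ on that region; integrating $t^{1-2s}\cdot t^{4s}$ over $(0,2r)$ closes this contribution. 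The main obstacle is obtaining the uniform-in-$\lambda$ control of $\overline{v^\lambda}$ on a bounded portion of $\overline{\R}$; once the near- and far-field contributions of the Poisson integral are separated, the rest reduces to the Riesz potential bounds afforded by Proposition \ref{prop-2.6} and the tail bound \eqref{f.4}.
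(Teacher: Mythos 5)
Your Caccioppoli-type argument is correct in outline and is genuinely different from the paper's proof. The paper splits $u^\lambda=u_1^\lambda+u_2^\lambda$ with a spatial cutoff applied to the \emph{source} $|y|^ae^{u^\lambda(y)}$: $u_1^\lambda$ is the Riesz potential of the compactly supported part (so its extension has globally bounded weighted Dirichlet energy by the argument of Lemma~\ref{lef.3}), while $u_2^\lambda$ is smooth with uniformly bounded $C^1$ norm on $B_{3r/2}$, and $\nabla\ou_2^\lambda$ is then controlled pointwise by differentiating the Poisson kernel. You instead apply the cutoff to the \emph{test function} and derive a Caccioppoli inequality directly, reducing to a boundary term $\int|x|^ae^{u^\lambda}v^\lambda$ (controlled by the HLS bilinear estimate) and a weighted interior $L^2$ bound on $\overline{v^\lambda}$ (controlled by a near/far splitting of the Poisson integral plus Young's inequality and \eqref{f.4}). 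Both routes close; yours is arguably more streamlined, while the paper's avoids ever putting $\ou^\lambda$ itself into the weak form and so sidesteps some density/admissibility technicalities.

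Two points to spell out more carefully. First, using $\Phi=\overline{v^\lambda}\eta^2$ in \eqref{1.weak} requires a density argument: you need $t^{(1-2s)/2}\nabla\ou^\lambda\in L^2_{\mathrm{loc}}$ (which follows from $u\in\dot H^s_{\mathrm{loc}}$, giving finiteness for each $\lambda$ but not yet uniformity) and $|x|^ae^{u^\lambda}v^\lambda\in L^1(B_{2r})$ to pass to the limit; those are exactly what you end up proving, so the logic is not circular, but the approximation step should be stated. Second, the uniform bound $\||\cdot|^ae^{u^\lambda}\|_{L^p(B_{4r})}\le C(r)$ for $\lambda\ge1$ is not literally supplied by Proposition~\ref{prop-2.6}, which gives annular estimates for $u$ itself. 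You must scale: writing $\int_{B_{4r}}(|x|^ae^{u^\lambda})^p\,dx=\lambda^{2ps-n}\int_{B_{4r\lambda}}(|y|^ae^u)^p\,dy$, the tail $B_{4r\lambda}\setminus B_{2R}$ is handled by \eqref{est-p-3} (needing $p<n/(2s)$), while the compact piece $B_{2R}$ uses the standing hypothesis $|x|^ae^u\in L^2_{\mathrm{loc}}$ (needing $p\le2$), after which $\lambda^{2ps-n}\le1$ closes it. Taking $p=\tfrac{2n}{n+2s}$ meets all three constraints, makes the HLS bilinear form self-dual for the boundary term, and yields $q=\tfrac{2n}{n-2s}>2$ automatically for the near-field $L^2$ bound, so no further tuning of $p$ is actually needed.
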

	\begin{proof}
	We introduce a cut-off function $\varphi\in C_c^\infty(B_{4r})$ with $\varphi=1$ in $B_{2r}$. Writing $u^\l$ into 
	$$u^\l=u_1^\l+u_2^\l,$$ where
		\begin{align*}
		u^\l_1(x)&=c(n,s)\int_{\B}\left(\frac{1}{|x-y|^{n-2s}}-\frac{1}{(1+|y|)^{n-2s}}\right)\varphi(y)|y|^ae^{u^\l(y)} dy+c_\l,
		\end{align*}
		and
		\begin{align*}
		u^\l_2(x)&=c(n,s)\int_{\B}\left(\frac{1}{|x-y|^{n-2s}}-\frac{1}{(1+|y|)^{n-2s}}\right)(1-\varphi(y))|y|^ae^{u^\l(y)} dy.
		\end{align*}
		Let $\ou_i^\l$ denote the $s$-harmonic extension of $u_i^\l$ for $i=1,2$ respectively. As in Lemma \ref{lef.3}, one can show that    
		\begin{equation}
		\label{f.25}
		\int_{\R}t^{1-2s}|\nabla\ou^\l_1(x)|^2dxdt=\kappa_s\int_{\B}\left|\ss u^\l_1(x)\right|^2dx\leq C(r).
		\end{equation}
		Next, we shall prove that
		\begin{equation}
		\label{f.26}
		\int_{B^{n+1}_r\cap\R}t^{1-2s}|\nabla\ou^\l_2(x)|^2dxdt\leq C(r)\quad \mbox{for every}~r\geq 1,~\lambda\geq1.
		\end{equation}
		Proceeding similarly as in the proof of Lemma \ref{lef.1}, one can verify that
		\begin{equation}
		\label{f.27}
		\int_{\B}\frac{|u_{2}^\l(x)|}{1+|x|^{n+2s}}dx\leq C(r)\quad \mathrm{and}\quad \|\nabla u_2^\l\|_{L^\infty(B_{3r/2})}\leq C(r).
		\end{equation}
As a consequence, we have
		\begin{equation}
		\label{f.28}
		\|u_2^\l\|_{L^\infty(B_{3r/2})}\leq C(r).
		\end{equation}
		In order to prove \eqref{f.26}, we  consider $\partial_t\ou^\l_2$ and $\nabla_x\ou^\l_2$ separately. For the first term, we notice that
		\begin{align*}
		\partial_t\ou^\l_2(X)=~&\partial_t(\ou^\l_2(x,t)-u^\l_2(x))=d_{n,s}\partial_t\int_{\B}\frac{t^{2s}}{|(x-y,t)|^{n+2s}}(u_2^\l(y)-u_2^\l(x))dy\\
		=~&d_{n,s}\int_{\B}\partial_t\left(\frac{t^{2s}}{|(x-y,t)|^{n+2s}}\right)(u_2^\l(y)-u_2^\l(x))dy,
		\end{align*}
		where we use
		$$d_{n,s}\int_{\B}\frac{t^{2s}}{|(x-y,t)|^{n+2s}}dy=1.$$
		From \eqref{f.27} and \eqref{f.28},  for $|x-x_0|\leq r$ we get
		\begin{equation}
		\label{f.29}
		\begin{aligned}
		&\left|\int_{\B\setminus B_ {3r/2}}\partial_t\left(\frac{t^{2s}}{|(x-y,t)|^{n+2s}}\right) (u_2^\l(y)-u_2^\l(x)) dy\right|\\
		&\leq C(r)t^{2s-1}\int_{\B\setminus B_ {3r/2}}\frac{(|u_2^\l(y)|+1)}{1+|y|^{n+2s}}dy\leq C(r)t^{2s-1}.
		\end{aligned}
		\end{equation}
		Using \eqref{f.27}, we have
		\begin{equation}
		\label{f.30}
		\begin{aligned}
		&\left|\int_{B_{3r/2}}\partial_t\left(\frac{t^{2s}}{|(x-y,t)|^{n+2s}}\right)(u_2^\l(y)-u_2^\l(x))dy\right|\\
		&\leq C\|\nabla u_2^\l\|_{L^\infty(B_{3r/2})}\int_{B_{3r/2}}\left|\partial_t\left(\frac{t^{2s}}{|(x-y,t)|^{n+2s}}\right)\right||x-y|dy\\
		&\leq C(r)\left(t^{2s-1}+1\right).
		\end{aligned}
		\end{equation}
		By \eqref{f.29} and \eqref{f.30}, we obtain
		\begin{equation}
		\label{f.31}
		\begin{aligned}
		\int_{B^{n+1}_r\cap\R}t^{1-2s}|\partial_t\ou^\l_2|^2dxdt
		&\leq C(r)\int_{B^{n+1}_r\cap\R}\left(t^{1-2s}+t^{2s-1}\right)dxdt\\
		&\leq C(r).
		\end{aligned}
		\end{equation}
	    For the term $\nabla_x\ou_2^\l$,  in a similar way we have
		\begin{equation}
		\label{f.35}
		\int_{B^{n+1}_r\cap\R}t^{1-2s}|\nabla_x\ou^\l_2|^2dxdt\leq C(r).
		\end{equation}
		Then,  \eqref{f.24} follows from \eqref{f.25}, \eqref{f.31} and \eqref{f.35}. Hence, the proof is completed.
	\end{proof}

	\begin{proposition}
		\label{prf.1}
There exists a constant $C$ such that $c_\lambda\leq C$ for $\lambda\in [1,\infty)$. Consequently, we have
		\begin{equation*}
		\lim_{\l\to+\infty}E(\ou,\l)=\lim_{\l\to\infty}E(\ou^\l,1)<+\infty.
		\end{equation*}
	\end{proposition}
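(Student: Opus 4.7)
The plan is to first establish the upper bound $c_\l\le C$ (a lower bound on $c_\l$ comes essentially for free from the monotonicity of $E$), and then to combine it with Lemmas \ref{lef.2} and \ref{lef.4} together with Theorem \ref{th4.2} to derive finiteness of the limit.

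To bound $c_\l$ from above, I would use the representation $u^\l=v^\l+c_\l$ from Lemma \ref{u-representation}, which yields
\[
e^{c_\l}\int_{B_1}|x|^a e^{v^\l(x)}\,dx=\int_{B_1}|x|^a e^{u^\l(x)}\,dx\le C,
\]
where the upper bound is \eqref{f.1} with $r=1$. It therefore suffices to produce a uniform positive lower bound on $\int_{B_1}|x|^a e^{v^\l}\,dx$. I would invoke Jensen's inequality on the finite measure $d\mu=|x|^a\mathbf{1}_{B_1}\,dx$, which reduces matters to a uniform lower bound on $\int_{B_1}|x|^a v^\l(x)\,dx$. By Fubini this equals $c(n,s)\int_{\B}|y|^a e^{u^\l(y)}K(y)\,dy$ with
\[
K(y):=\int_{B_1}|x|^a\!\left(\frac{1}{|x-y|^{n-2s}}-\frac{1}{(1+|y|)^{n-2s}}\right)dx.
\]
Using the same local/far-field split as in the proof of Lemma \ref{lef.1} one checks $|K(y)|\le C$ on $B_2$ and $|K(y)|\le C|y|^{-(n-2s+1)}$ on $\B\setminus B_2$. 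Combining with \eqref{f.1} and a dyadic decomposition, one obtains $\bigl|\int_{\B}|y|^a e^{u^\l(y)}K(y)\,dy\bigr|\le C$ uniformly in $\l$. Feeding back into Jensen gives $\int_{B_1}|x|^a e^{v^\l}\,dx\ge c_0>0$ uniformly, hence $c_\l\le C$.

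For the consequence, I would assemble the three ingredients of the monotonicity formula \eqref{1.monotonicity} at $x_0=0$ after the rescaling $E(\ou,\l)=E(\ou^\l,1)$ that removes the $\log r$ summand (since $r=\l=1$): Lemma \ref{lef.4} bounds the weighted Dirichlet integral $\int_{B_1^{n+1}\cap\R}t^{1-2s}|\nabla\ou^\l|^2$, \eqref{f.1} bounds the nonlinearity integral $\int_{B_1}|x|^a e^{u^\l}$, and Lemma \ref{lef.2} gives $\int_{\partial B_1^{n+1}\cap\R}t^{1-2s}\ou^\l\,d\sigma=c_sc_\l+O(1)$. These combine to
\[
E(\ou,\l)=E(\ou^\l,1)=(2s+a)c_sc_\l+O(1)
\]
uniformly in $\l\ge 1$. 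The bound $c_\l\le C$ thus forces $E(\ou,\l)\le C'$; combined with the monotonicity from Theorem \ref{th4.2}, this forces $\lim_{\l\to\infty}E(\ou,\l)$ to exist and be finite.

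The main obstacle is extracting enough decay of the kernel $K(y)$ to beat the borderline a priori bound $\int_{B_r}|y|^a e^{u^\l}\,dy\le Cr^{n-2s}$. A naive estimate $|K(y)|\lesssim |y|^{-(n-2s)}$ would produce a logarithmically divergent dyadic sum; one must therefore exploit the specific subtraction $|x-y|^{-(n-2s)}-(1+|y|)^{-(n-2s)}$ built into $v^\l$ to gain the extra factor of $|y|^{-1}$ that makes the sum $\sum_k 2^{-k}$ converge.
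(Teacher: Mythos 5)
Your proposal is correct and reaches the same conclusion, but your route to the upper bound $c_\l\le C$ is less economical than the paper's. The paper's Lemma~\ref{lef.1} already provides, in~\eqref{f.5}, the pointwise estimate $v^\l(x)=c(n,s)\int_{B_{2R}}\frac{|y|^a e^{u^\l(y)}}{|x-y|^{n-2s}}dy+O_R(1)$ for $|x|\le R$; since the integral term is nonnegative, this gives $v^\l(x)\ge -C$ pointwise in $B_R$, hence $u^\l\ge c_\l-C$ there, and feeding this directly into $\int_{B_R}|x|^a e^{u^\l}\,dx\le CR^{n-2s}$ from~\eqref{f.1} yields $c_\l\le C$ with no averaging needed. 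Your Jensen step and the explicit kernel $K(y)$ with the decay $|K(y)|\lesssim |y|^{-(n-2s+1)}$ are in effect re-deriving the content of~\eqref{f.5} (the extra $|y|^{-1}$ from the subtraction of $(1+|y|)^{-(n-2s)}$ is precisely the mechanism used in the proof of Lemma~\ref{lef.1}); the argument is valid, just redundant given the already-established lemma. A minor point: the paper also records a \emph{lower} bound on $c_\l$ (obtained from $E(\ou^\l,1)\ge E(\ou,1)$ via monotonicity together with Lemma~\ref{lef.2}) so as to conclude $\int_{\partial B^{n+1}_1\cap\R}t^{1-2s}\ou^\l\,d\sigma=O(1)$; as you observe, the lower bound is not needed for the finiteness of $\lim_\l E(\ou,\l)$ since monotonicity already gives existence of the limit and $c_\l\le C$ alone bounds $E$ from above, but it costs nothing to note it. Your assembly of the three ingredients for the ``consequently'' part, and the observation that the $\log r$ term drops out on $\partial B_1^{n+1}$, agree with the paper.
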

	\begin{proof} Combining \eqref{f.1} and Lemma \ref{lef.4} implies that the following term
	\begin{equation*}
	\frac12\int_{ B^{n+1}_1\cap\R}t^{1-2s}|\nabla\ou^\l|^2dxdt-\kappa_s\int_{B^{n+1}_1\cap\partial\mathbb{R}_+^{n+1}}|x|^a e^{\ou^\l}dx
	\end{equation*}
	is bounded in $\l\in[1,\infty)$.
	Applying Theorem \ref{th4.2} and Lemma \ref{lef.2}, we get
	\begin{equation*}
	E(\ou^\l,1)=(2s+a)c_s c_\l+O(1)\geq E(\ou,1)=(2s+a)c_sc_1+O(1),
	\end{equation*}
	which implies that
	\begin{equation*}
	c_\l ~\ \mbox{is bounded from below for}~\ \l\geq1.
	\end{equation*}
	From \eqref{f.5}, we have $$u^\l=v^\l+c_\l\geq c_\l-C~\ \text{in}~\  B_R~\ \text{for}~\ \l\geq 1.$$ 
	Then, by \eqref{f.1} we get
	 $$c_\l\leq C\quad\text{for }\l\geq 1.$$
	Thus,  we conclude  that $c_\l$ is bounded. Therefore, Lemma \ref{lef.2} implies that
	\begin{equation*}
	\int_{\partial B^{n+1}_1\cap\R}t^{1-2s}\ou^\l(X)d\sigma=O(1).
	\end{equation*}
This finishes the proof.
 \end{proof}
	
We end this part with an estimate that will be used in the proof of Theorem \ref{th1.1}.

\begin{lemma}\label{lem-H1bound} For every $r>1$ and $\lambda\geq 1$ we have $$\int_{B_r^{n+1}\cap\R}t^{1-2s}\left(|\ou^\l|^2+|\nabla \ou^\l|^2\right)dxdt\leq C(r).$$  \end{lemma}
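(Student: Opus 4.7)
The gradient contribution is already handled by Lemma \ref{lef.4}, which gives $\int_{B_r^{n+1}\cap\R}t^{1-2s}|\nabla\ou^\l|^2\,dxdt\leq C(r)$ uniformly in $\lambda\geq 1$. Thus the entire task is to bound the weighted $L^2$ norm of $\ou^\l$ on the half-ball by a constant depending only on $r$. My plan is to reduce the $L^2$ estimate to the gradient estimate via a weighted Poincaré inequality, and then to control the corresponding weighted mean using the ingredients already built up in Lemma \ref{lef.2} and Proposition \ref{prf.1}.

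Since $s\in(0,1)$ forces $|1-2s|<1$, the weight $t^{1-2s}$ is a Muckenhoupt $A_2$ weight on $\mathbb{R}^{n+1}$, and the classical theory of Fabes--Kenig--Serapioni supplies a weighted Poincaré inequality on the half-ball $B_r^{n+1}\cap\R$ of the form
\begin{equation*}
\int_{B_r^{n+1}\cap\R}t^{1-2s}\bigl|\ou^\l-m_r(\ou^\l)\bigr|^2\,dxdt\leq C r^2\int_{B_r^{n+1}\cap\R}t^{1-2s}|\nabla\ou^\l|^2\,dxdt,
\end{equation*}
where $m_r(\ou^\l)$ denotes the $t^{1-2s}$-weighted mean of $\ou^\l$ over $B_r^{n+1}\cap\R$. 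The right-hand side is already bounded by $C(r)$ thanks to Lemma \ref{lef.4}, so the oscillatory part of $\ou^\l$ is under control.

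To finish, I will show that $m_r(\ou^\l)$ itself is bounded uniformly in $\lambda\geq 1$. The area-type identity \eqref{3.area-1} in Lemma \ref{lef.2} gives $\int_{B_r^{n+1}\cap\R}t^{1-2s}\ou^\l\,dxdt=\omega_s c_\lambda r^{n+2-2s}+C(r)$, and Proposition \ref{prf.1} ensures that $c_\lambda$ stays bounded for $\lambda\in[1,\infty)$; dividing by $\int_{B_r^{n+1}\cap\R}t^{1-2s}\,dxdt=\omega_s r^{n+2-2s}$ yields $|m_r(\ou^\l)|\leq C(r)$. Combining this with the Poincaré estimate through the elementary inequality $|\ou^\l|^2\leq 2|\ou^\l-m_r(\ou^\l)|^2+2|m_r(\ou^\l)|^2$ and multiplying by the finite weighted volume of the half-ball delivers $\int_{B_r^{n+1}\cap\R}t^{1-2s}|\ou^\l|^2\,dxdt\leq C(r)$, as desired. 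The only step that is not a direct quotation of earlier results is the weighted Poincaré inequality; since this is a standard fact for $A_2$ weights on regular domains such as half-balls, I do not foresee any genuine obstacle, and the main conceptual content is the clean interplay between the gradient bound of Lemma \ref{lef.4} and the mean bound coming from \eqref{3.area-1} combined with the uniform bound on $c_\lambda$.
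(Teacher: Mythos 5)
Your proposal is correct and follows essentially the same route as the paper: the weighted Poincar\'e inequality for the $A_2$ weight $t^{1-2s}$ (Fabes--Kenig--Serapioni) reduces the $L^2$ bound to the gradient bound of Lemma \ref{lef.4} plus a bound on the weighted average, which comes from \eqref{3.area-1} together with the uniform bound on $c_\lambda$ from Proposition \ref{prf.1}. You merely spell out the role of Proposition \ref{prf.1} more explicitly than the paper does.
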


\begin{proof}
We apply the Poincar\'{e} inequality with $A_2$ Muckenhoupt weight $t^{1-2s}$, see \cite{Fabes,HKP,PRP}, to get 
\begin{equation*}
\int_{B_r^{n+1}\cap\R}t^{1-2s} |\ou^{\l}-\mbox{avg}(\ou^{\l})|^2 dxdt \leq C(r)\int_{B_r^{n+1}\cap\R}t^{1-2s} |\nabla\ou^{\l}|^2 dxdt,
\end{equation*}
where $$\mbox{avg}(\ou^{\l})=\frac{\int_{B_r^{n+1}\cap\R}t^{1-2s} \ou^{\l}dxdt}{\omega_s r^{n+2-2s}},$$ denotes the average of $u^{\l}$ over the upper half ball. Therefore, the proof follows from Lemma \ref{lef.2} and Lemma \ref{lef.4}. \end{proof}

	\subsection{Non-existence of Stable Homogeneous Solution}
	In this part, we  prove the non-existence of stable homogeneous solutions of the form of $\tau(\theta)-(2s+a)\log r$. The arguments in this section are motivated by the ones given in \cite[section 4.1]{hy} and in \cite{ddw,ddww,fw,fwy} for the classification of  stable homogeneous solutions. 
	\begin{theorem}
		\label{th3.1}
		There is no stable solution of \eqref{fg-1} of the form $\tau(\theta)-(2s+a)\log r$ provided
		\begin{equation*}
		\dfrac{\Gamma(\frac n2)\Gamma(s)}{\Gamma(\frac{n-2s}{2})}\left(s+\frac a2\right)> \dfrac{\Gamma^2(\frac{n+2s}{4})}{\Gamma^2(\frac{n-2s}{4})}.
		\end{equation*}
		Here,  $\theta=\frac{x}{|x|}\in\mathbb{S}^{n-1}.$
	\end{theorem}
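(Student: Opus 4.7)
The plan is to derive a contradiction by testing the stability inequality \eqref{1.stablecondition} against a logarithmic truncation of the Hardy extremizer, after first reducing the equation to an angular integral identity. Writing the ansatz $u(r\theta)=\tau(\theta)-(2s+a)\log r$, the equation \eqref{fg-1} on $\mathbb{R}^n\setminus\{0\}$ reads $(-\Delta)^s u = e^{\tau(\theta)}r^{-2s}$. First I would pair the equation with an arbitrary radial $\vp\in C_c^\infty(\mathbb{R}^n\setminus\{0\})$. Using the self-adjointness of $(-\Delta)^s$, the cancellation $\int_{\mathbb{R}^n}(-\Delta)^s\vp\,dx=0$ (which kills the contribution of the bounded angular part $\tau(\theta)$), and the explicit identity $(-\Delta)^s[-(2s+a)\log|x|]=\lambda_{n,s}|x|^{-2s}$, one obtains
\begin{equation*}
\int_{\mathbb{S}^{n-1}} e^{\tau(\theta)}\,d\theta \;=\; \omega_{n-1}\lambda_{n,s}.
\end{equation*}

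Next, choose a smooth cutoff $\eta_R\in C_c^\infty((0,\infty))$ with $\eta_R\equiv 1$ on $[R^{-1},R]$, supported in $[R^{-2},R^2]$, and varying on a logarithmic scale so that $\int_0^\infty \eta_R(r)^2\,\frac{dr}{r}\sim C\log R$. Plug $\varphi_R(x)=|x|^{-(n-2s)/2}\eta_R(|x|)$ into \eqref{1.stablecondition}. The right-hand side is evaluated immediately via the angular identity:
\begin{equation*}
\int_{\mathbb{R}^n} |x|^a e^u \varphi_R^2\,dx \;=\; \omega_{n-1}\lambda_{n,s}\int_0^\infty \eta_R(r)^2\,\frac{dr}{r}.
\end{equation*}
The crux is the matching asymptotic for the seminorm:
\begin{equation*}
\frac{c_{n,s}}{2}\iint \frac{(\varphi_R(x)-\varphi_R(y))^2}{|x-y|^{n+2s}}\,dx\,dy \;=\; \omega_{n-1}\Lambda_{n,s}\int_0^\infty \eta_R(r)^2\,\frac{dr}{r}+O(1).
\end{equation*}
This reflects the asymptotic sharpness of Hardy's inequality on the truncated extremizer and would be proved by rewriting the seminorm as $\int \varphi_R(-\Delta)^s\varphi_R\,dx$, applying the classical pointwise identity $(-\Delta)^s[|x|^{-(n-2s)/2}]=\Lambda_{n,s}|x|^{-(n+2s)/2}$, and carefully controlling the commutator between $(-\Delta)^s$ and the slowly varying cutoff $\eta_R$. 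Dividing \eqref{1.stablecondition} by $\log R$ and sending $R\to\infty$, stability forces $\Lambda_{n,s}\geq\lambda_{n,s}$, contradicting the hypothesis.

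The main obstacle is the $O(1)$ commutator estimate in the last display: the residual term $(-\Delta)^s(\eta_R\phi)-\eta_R(-\Delta)^s\phi$ for $\phi(x)=|x|^{-(n-2s)/2}$ is a principal-value integral involving a background profile $\phi\notin L^2$, so the proof demands careful kernel estimates to verify that the contribution remains bounded in $R$ despite $\eta_R$ transitioning on an unboundedly long logarithmic region. Given the paper's reliance on the Caffarelli-Silvestre extension, a cleaner alternative route is to lift to the half-space: since $u$ is scale-invariant in the sense $u(\lambda x)=u(x)-(2s+a)\log\lambda$, its extension $\ou$ satisfies the same invariance in $(x,t)$ and therefore takes the form $T(\Theta)-(2s+a)\log\rho$ with $\rho=\sqrt{|x|^2+t^2}$ and $\Theta=(x,t)/\rho\in\mathbb{S}^n_+$. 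This reduces the stability inequality, after separation of variables, to a weighted spectral inequality on the upper hemisphere whose first eigenvalue is precisely $\Lambda_{n,s}$; comparing it against the constant eigenfunction and using the angular identity then yields the desired contradiction directly.
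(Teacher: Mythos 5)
Your proposal follows essentially the same route as the paper: derive the angular identity $\int_{\mathbb{S}^{n-1}}e^{\tau}d\theta=\lambda_{n,s}|\mathbb{S}^{n-1}|$ by pairing the equation with radial test functions, then test stability on the truncated Hardy extremizer $|x|^{-(n-2s)/2}\eta(|x|)$ with a logarithmic cutoff, match the leading $\log$-divergent terms to get $\Lambda_{n,s}\geq\lambda_{n,s}$, and contradict the hypothesis. The commutator term you flag as the main obstacle is handled in the paper exactly as you describe, via the integral representation of $\Lambda_{n,s}$ and the bound $f_{\varepsilon}(t)=O(\log t)$ making the cross term $O(1)$.
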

	
	\begin{proof}
		For any radially symmetric function $\vp\in C_c^\infty(\B)$, we have
		\begin{equation*}
		\int_{\B}\tau(\theta)\s\vp dx=0.
		\end{equation*}
		Then, from \eqref{weak-fg}
		\begin{equation*}
		%\label{3.1}
		\begin{aligned}
		\int_{\B}e^{\tau(\theta)-2s\log|x|}\vp dx
		=~&\int_{\B}\left[\tau(\theta)-(2s+a)\log|x|\right]\s\vp dx\\
		=~&\int_{\B}\left[-(2s+a)\log|x|\right]\s\vp dx\\=~&\frac{2s+a}{2s}A_{n,s}\int_{\B}\frac{\vp}{|x|^{2s}}dx,
		\end{aligned}
		\end{equation*}
		where  the last equality follows from  (see e.g. \cite{r1})
		$$\s\log\frac{1}{|x|^{2s}}=A_{n,s}\frac{1}{|x|^{2s}}=2^{2s}\frac{\Gamma(\frac n2)\Gamma(1+s)}{\Gamma(\frac{n-2s}{2})}\frac{1}{|x|^{2s}}.$$
		This leads to
		\begin{equation*}
		\begin{aligned}
		0&=\int_{\B}\left(e^{\tau(\theta)}-\frac{2s+a}{2s}A_{n,s}\right)\frac{\vp}{|x|^{2s}}dx\\
		&=\int_0^\infty r^{n-1-2s}\vp(r)\int_{\mathbb{S}^{n-1}}\left(e^{\tau(\theta)}-\frac{2s+a}{2s}A_{n,s}\right)d\theta dr,
		\end{aligned}
		\end{equation*}
		which implies
		\begin{equation}
		\label{3.5}
		\int_{\mathbb{S}^{n-1}}e^{\tau(\theta)}d\theta=\frac{2s+a}{2s}A_{n,s}|\mathbb{S}^{n-1}|.
		\end{equation}
		Now,  we shall use the stability  condition to derive a counterpart equation of \eqref{3.5}.  We fix a  radially symmetric smooth   cut-off function
		\begin{equation*}
		\eta(x)=\begin{cases}
		1\quad  &\mathrm{for}~|x|\leq1,\\ \rule{0cm}{.5cm}
		0\quad  &\mathrm{for}~|x|\geq2,
		\end{cases}
		\end{equation*}
		and set	
		$$\eta_{\varepsilon}(x)=\left(1-\eta\left(\frac{2x}{\e}\right)\right)\eta(\e x).$$
		It is straightforward to see that $\eta_{\e}=1$ for $\e<r<\e^{-1}$ and $\eta_{\e}=0$ for either $r<\frac{\e}{2}$ or $r>\frac{2}{\e}$. We test the stability condition \eqref{1.stablecondition} on the function $\psi(x)=r^{-\frac{n-2s}{2}}\eta_{\e}(r)$. Let $|y|=rt$ and we notice that
		\begin{equation*}
		\begin{aligned}
		\int_{\B}\frac{\psi(x)-\psi(y)}{|x-y|^{n+2s}}dy
		&= r^{-\frac n2-s}\int_0^\infty\int_{\S}\frac{\eta_{\e}(r)-t^{-\frac{n-2s}{2}}\eta_{\e}(rt)}{(t^2+1-2t\langle \theta,\omega\rangle)^{\frac{n+2s}{2}}}t^{n-1}dtd\omega\\
		&= r^{-\frac n2-s}\eta_{\e}(r)\int_0^{\infty}\int_{\S}\frac{1-t^{-\frac{n-2s}{2}}}
		{(t^2+1-2t\langle\theta,\omega\rangle)^{\frac{n+2s}{2}}}t^{n-1}dtd\omega\\
		&\quad +r^{-\frac n2-s}\int_0^{\infty}\int_{\S}\frac{t^{n-1-\frac{n-2s}{2}}(\eta_{\e}(r)-\eta_{\e}(rt))}
		{(t^2+1-2t\langle\theta,\omega\rangle)^{\frac{n+2s}{2}}}dtd\omega.
		\end{aligned}
		\end{equation*}
		It is known that, see e.g.  \cite[Lemma 4.1]{fa}, 
		\begin{equation*}
		\Lambda_{n,s}=c_{n,s}\int_0^{\infty}\int_{\S}\frac{1-t^{-\frac{n-2s}{2}}}
		{(t^2+1-2t\langle\theta,\omega\rangle)^{\frac{n+2s}{2}}}t^{n-1}dtd\omega.
		\end{equation*}
		Therefore,
		\begin{equation*}
		\begin{aligned}
		c_{n,s}\int_{\B}\frac{\psi(x)-\psi(y)}{|x-y|^{n+2s}}dy
		=~&c_{n,s}r^{-\frac n2-s}\int_0^{\infty}\int_{\S}\frac{t^{n-1-\frac{n-2s}{2}}(\eta_{\e}(r)-\eta_{\e}(rt))}
		{(t^2+1-2t\langle\theta,\omega\rangle)^{\frac{n+2s}{2}}}dtd\omega\\
		&+\Lambda_{n,s}r^{-\frac n2-s}\eta_{\e}(r).
		\end{aligned}
		\end{equation*}
		Based on the above computations, we compute the left-hand side of the stability inequality \eqref{1.stablecondition} 
		\begin{equation}
		\label{3.8}
		\begin{aligned}
		&\frac{c_{n,s}}{2}\int_{\B}\int_{\B}\frac{(\psi(x)-\psi(y))^2}{|x-y|^{n+2s}}dxdy\\
		&=c_{n,s}\int_{\B}\int_{\B}\frac{(\psi(x)-\psi(y))\psi(x)}{|x-y|^{n+2s}}dxy\\
		&=c_{n,s}\int_0^\infty dt\left[\int_0^\infty r^{-1}\eta_{\e}(r)(\eta_{\e}(r)-\eta_{\e}(rt))dr\right]\int_{\S}\int_{\S}\frac{t^{n-1-\frac{n-2s}{2}}d\omega d\theta }
		{(t^2+1-2t\langle\theta,\omega\rangle)^{\frac{n+2s}{2}}}\\
		&\quad+\Lambda_{n,s}|\S|\int_0^\infty r^{-1}\eta_{\e}^2(r)dr.
		\end{aligned}
		\end{equation}
		We compute the right-hand side of  the stability inequality \eqref{1.stablecondition} for the test function $\psi(x)=r^{-\frac{n}{2}+s}\eta_{\e}(r)$ and $u(r)=-(2s+a)\log r+\tau(\theta)$,
		\begin{equation}
		\label{3.9}
		\begin{aligned}
		\int_{\B}|x|^a e^u\psi^2 dx=&\int_0^\infty\int_{\S}\eta_{\e}^2(r)r^{-2s}r^{-(n-2s)}e^{\tau(\theta)}r^{n-1}drd\theta\\
		=&\int_0^\infty r^{-1}\eta^2_{\e}(r)dr\int_{\S}e^{\tau(\theta)}d\theta.
		\end{aligned}
		\end{equation}
		From the definition of function $\eta_{\e}$, we have
		\begin{equation*}
		\int_0^\infty r^{-1}\eta_{\e}^2(r)dr=\log\frac{2}{\e}+O(1).
		\end{equation*}
		One can see that both the first term on the right-hand side of \eqref{3.8} and the right-hand side of \eqref{3.9} carry the term $\int_0^\infty r^{-1}\eta_{\e}^2(r)dr$ and it tends to $\infty$ as $\e\to0$. As \cite{hy,fw}, one can show that
		\begin{equation*}
		f_{\e}(t):=\int_0^\infty r^{-1}\eta_{\e}(r)(\eta_{\e}(r)-\eta_{\e}(rt))dr =O(\log t).
		\end{equation*}
		Consequently,  we have
		\begin{equation*}
		\begin{aligned}
		&\int_0^{\infty}\left[\int_0^\infty r^{-1}\eta_{\e}(r)(\eta_{\e}(r)-\eta_{\e}(rt))\right]\int_{\S}\int_{\S}
		\dfrac{t^{n-1-\frac{n-2s}{2}}}
		{(t^2+1-2t\langle\theta,\omega\rangle)^{\frac{n+2s}{2}}}d\omega d\theta dt\\
		&\approx \int_0^\infty\int_{\S}\int_{\S}\dfrac{t^{n-1-\frac{n-2s}{2}}\log t}
		{(t^2+1-2t\langle\theta,\omega\rangle)^{\frac{n+2s}{2}}}d\omega d\theta dt\\
		& = O(1) .
		\end{aligned}
		\end{equation*}
		Collecting the higher-order term $(\log\e)$, we get
		\begin{equation}
		\label{3.12}
		\Lambda_{n,s}|\S|\geq \int_{\S}e^{\tau(\theta)}d\theta.
		\end{equation}
		From \eqref{3.5} and \eqref{3.12}, we obtain that
		\begin{equation*}
		\Lambda_{n,s}\geq \frac{2s+a}{2s} A_{n,s},
		\end{equation*}
		which contradicts the assumption \eqref{1.stable1}. Therefore, such homogeneous solution does not exist and we finish the proof.
	\end{proof}

 	\subsection{Blow-down Analysis} \label{section5}
	In this part, we prove Theorem \ref{th1.1}.
	
	\begin{proof}[Proof of Theorem \ref{th1.1}.]
	Let  $u$  be  a finite Morse index solution to \eqref{fg-1} for some  $n>2s$ satisfying  \eqref{1.stable1}. Let $R>1$ be such  that $u$   is stable outside the ball  $B_R$.
		
		From Lemma \ref{lem-H1bound},  we obtain that there exists a sequence $\lambda_i\to+\infty$ such that $\ou^{\lambda_i}$ converges weakly in $\dot{H}^1_{\mathrm{loc}}(\overline\R,t^{1-2s}dxdt)$ to a function $\ou^\infty$. Due to Lemma \ref{lef.3},  the sequence $\{u^{\l}\}$ has a weak limit $u^{\infty}$ and $u^{\l}\rightarrow u^{\infty}$ strongly in $L^2_{\mathrm{loc}}(\B)$. To show that $u^{\infty}$ satisfies \eqref{weak-fg}, we need to establish the uniform integrability of $u^{\l}$ with respect to a function decays at infinity like $|x|^{-(n+2s)}$, i.e., for any $\ve$ there exists $r\gg1$ such that
		\begin{equation}
		\label{5.1}
		\int_{\B\setminus B_r}\frac{|u^\l(x)|}{1+|x|^{n+2s}}dx<\ve,\quad\text{for every }\lambda\geq 1.
		\end{equation}
		In fact,  as in \eqref{f.111}, we get
		\begin{equation*}
		\begin{aligned}
		\int_{\B\setminus B_r}\dfrac{|u^\l(x)|}{1+|x|^{n+2s}}dx\leq~&
		C\int_{\B\setminus B_r}\dfrac{c_\lambda}{1+|x|^{n+2s}}dx
		+C\int_{\mathbb{R}^n\setminus B_{r_0}}|y|^a e^{u^\l(y)}|E(y)|dy\\
		&+C\int_{\B\setminus B_r}\int_{B_{r_0}}
		\frac{|y|^a e^{u^\l(y)}}{|x-y|^{n-2s}(1+|x|^{n+2s})}dydx\\
		&+C\int_{\B\setminus B_r}\int_{B_{r_0}}
		\frac{|y|^a e^{u^\l(y)}}{((1+|y|)^{n-2s})(1+|x|^{n+2s})}dydx\\
		\leq ~& Cr^{-2s}+Cr_0^{-\gamma}+Cr^{-2s}r_0^{n-2s},
		\end{aligned}
		\end{equation*}
where the uniform boundedness of $c_{\l}$ achieved in Proposition \ref{prf.1} is used. We  first choose $r_0$ large enough such that $Cr_0^{-\gamma}\leq\ve/2$ and then choose $r$ such that $Cr^{-2s}r_0^{n-2s}+Cr^{-2s}\leq\ve/2$. Thus, \eqref{5.1} is proved. As a consequence,  $u^\infty\in L_s(\B)$, and for any $\vp\in C_c^\infty(\B)$
		\begin{equation}
		\label{5.2}
		\lim\limits_{i\to\infty}\int_{\B}u^{\lambda_i}\s\vp dx=\int_{\B}u^{\infty}\s\vp dx.
		\end{equation}
By \eqref{5.2}, we have that $u^{\infty}$ is indeed the restriction of $\ou^{\infty}$ on $\B$. On the other hand, we can also show that
		\begin{equation}
		\label{5.3}
		\lim\limits_{i\to\infty}\int_{\B}|x|^a e^{u^{\lambda_i}}\vp dx=\int_{\B}|x|^a e^{u^{\infty}}\vp dx,\quad \forall \varphi\in C_c^\infty(\B).
		\end{equation}
		From \eqref{5.2} and \eqref{5.3},  we conclude that $u^\infty$ satisfies equation \eqref{weak-fg}.

		Now,   we show that the limit function $\ou^\infty$ is homogenous, and is of the form $-(2s+a)\log r+\tau(\theta)$. First of all, for any $r>0$ it is easy to see that
		\begin{equation}
		\label{5.r}
		{\lim_{i\to\infty}E(\ou,\l_i r)~\ \mbox{is independent of}~\ r}.
		\end{equation}
	    Applying \eqref{5.r} for $R_2>R_1>0,$ we get
		\begin{equation*}
		\begin{aligned}
		0=~&\lim_{i\to\infty}E(\ou,\l_i R_2)-\lim_{i\to+\infty}E(\ou,\l_i R_1)\\
		=~&\lim_{i\to\infty}E(\ou^{\l_i}, R_2)-\lim_{i\to\infty}E(\ou^{\l_i},R_1)\\
		\geq~&\lim_{i\to\infty}\inf\int_{\left(B^{n+1}_{R_2}\setminus B^{n+1}_{R_1}\right)\cap\R}t^{1-2s}r^{2s-n}\left(\frac{\partial\ou^{\l_i}}{\partial r}+\frac{2s+a}{r}\right)^2dxdt\\
		\geq~&\int_{\left(B^{n+1}_{R_2}\setminus B^{n+1}_{R_1}\right)\cap\R}t^{1-2s}r^{2s-n}\left(\frac{\partial\ou^\infty}{\partial r}+\frac{2s+a}{r}\right)^2dxdt.
		\end{aligned}
		\end{equation*}
		Notice that in the last inequality we only used the weak convergence of $\ou^{\l_i}$
		to $\ou^\infty$ in $H^1_{\mathrm{loc}}(\overline\R,t^{1-2s}dxdt)$. So,
		$$\frac{\partial\ou^\infty}{\partial r}+\frac{2s+a}{r}=0\quad \mbox{a.e. in}\quad \R.$$
		Thus,  the claim is proved. In addition, $u^\infty$ is stable because the stability condition for $u^{\l_i}$ passes to the limit. Then, from Theorem \ref{th3.1} we conclude that \eqref{1.stable} holds, which contradicts \eqref{1.stable1}. This completes the proof.
	\end{proof}

\appendix
\renewcommand{\appendixname}{Appendix~\Alph{section}}

\section{}

The purpose of this appendix is to provide necessary details for the proof of Proposition \ref{prop-2.6}. First, we notice that the stability condition \eqref{1.stablecondition} can be extended to  $\ou$.	More precisely, if $u$ is stable in $\Omega$ then
	\begin{equation}
	\label{h.2}
	\int_{\R}t^{1-2s}|\nabla\Phi|^2dxdt\geq \kappa_s\int_{\B}|x|^a e^u\vp ^2dx,
	\end{equation}
	for every $\Phi\in C_c^\infty(\overline\R)$ satisfying $\vp(\cdot):=\Phi(\cdot,0)\in C_c^\infty(\Omega)$.
	Indeed, if  $\overline{\vp }$ is the $s$-harmonic extension of $\vp $,   we have
	\begin{align*}
	\int_{\R}t^{1-2s}|\nabla\Phi|^2dxdt &\geq \int_{\R}t^{1-2s}|\nabla\overline\vp |^2dxdt\\&
	=\kappa_s\int_{\B}\vp (-\Delta)^s\vp  dx\geq \kappa_s\int_{\B}|x|^a e^u\vp ^2dx.
	\end{align*}
	
	\medskip
	Before giving the conclusion of Moser's iteration, we present the following lemma which will be used in the later context.
	
	\begin{lemma}
		\label{lem-2.7}Let $e^{\alpha u}\in L^1( \Omega) $ for some $\Omega\subset \B$. Then $t^{1-2s}e^{\alpha\ou}\in L^1_{\mathrm{loc}}(\Omega\times [0, \infty))$.
	\end{lemma}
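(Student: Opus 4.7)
The plan is to decouple the ``local'' part of $u$ supported on $\Omega$ (where exponential integrability is provided by hypothesis) from its ``far-field'' tail on $\Omega^c$ (where we only know $u\in L_s(\B)$ and hence $u\in L^1_{\mathrm{loc}}(\B)$). Accordingly, I would split $u=u_1+u_2$ with $u_1:=u\chi_\Omega$ and $u_2:=u\chi_{\Omega^c}$, and exploit the linearity of the Poisson formula to write $\ou=\ou_1+\ou_2$, where $\ou_i$ is the $s$-harmonic extension of $u_i$.

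For the far-field piece, fix a compact set $K\Subset\Omega$ and $T>0$, and set $d:=\mathrm{dist}(K,\Omega^c)>0$. Because $|x-y|\geq d$ whenever $(x,t)\in K\times[0,T]$ and $y\in\Omega^c$, an easy case split (small $|y|$ versus large $|y|$) yields the uniform bound
\[ P(X,y)\;\leq\;\frac{C(K,T)}{1+|y|^{n+2s}},\qquad (x,t)\in K\times[0,T],\ y\in\Omega^c. \]
Pairing this with $u\in L_s(\B)\cap L^1_{\mathrm{loc}}(\B)$ shows that $|\ou_2(x,t)|$ is bounded on $K\times[0,T]$. Consequently $e^{\alpha\ou}\leq C(K,T)\,e^{\alpha\ou_1}$ on this set, and the task reduces to estimating $e^{\alpha\ou_1}$.

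The main tool for $\ou_1$ is Jensen's inequality: since $P(X,\cdot)\,dy$ is a probability measure on $\B$ and $\exp(\alpha\,\cdot\,)$ is convex,
\[ e^{\alpha\ou_1(X)}\;\leq\;\int_{\B}P(X,y)\,e^{\alpha u_1(y)}\,dy\;=\;\int_{\Omega}P(X,y)\,e^{\alpha u(y)}\,dy\;+\;\int_{\Omega^c}P(X,y)\,dy\;\leq\;\int_{\Omega}P(X,y)\,e^{\alpha u(y)}\,dy+1. \]
Multiplying by $t^{1-2s}$, integrating over $K\times[0,T]$, and applying Fubini, the result comes down to the uniform-in-$y$ bound
\[ \int_0^T\!\!\int_K t^{1-2s}P(X,y)\,dx\,dt\;\leq\;d_{n,s}\int_0^T\!\!\int_{\B}\frac{t\,dx\,dt}{(|x-y|^2+t^2)^{(n+2s)/2}}\;=\;C\int_0^T t^{1-2s}\,dt<\infty, \]
obtained via the elementary dilation $x-y=tw$ and finite because $s\in(0,1)$. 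Combining this with $e^{\alpha u}\in L^1(\Omega)$ gives $\int_0^T\!\!\int_K t^{1-2s}e^{\alpha\ou_1}dxdt<\infty$, and then the bound on $\ou_2$ finishes the proof.

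The only subtle point is arranging Jensen's inequality so that it actually applies. One cannot use it directly on $u$, since the hypothesis on $e^{\alpha u}$ is local to $\Omega$ whereas $P(X,\cdot)\,dy$ is a probability measure on all of $\B$; the integrand on the right of Jensen's would then involve $e^{\alpha u}$ on $\Omega^c$, which is uncontrolled. Replacing $u$ by its zero-extension $u_1$ inside the Poisson formula yields instead a quantity genuinely dominated by $\|e^{\alpha u}\|_{L^1(\Omega)}$ (plus a harmless ``$+1$'' coming from the probability-measure mass of $\Omega^c$), while the contribution of $u$ on $\Omega^c$ is separately absorbed into the uniformly bounded term $\ou_2$. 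Everything else is Fubini together with the scaling computation above.
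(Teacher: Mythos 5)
Your proposal is correct and closely tracks the paper's proof: both separate the far-field contribution from $\Omega^c$ (bounded on compact subsets of $\OR$ using $u\in L_s(\B)$ and the decay of $P$), apply Jensen's inequality to the near-field piece, and close via Fubini and the normalization $\int_{\B}P(X,y)\,dx=1$. The only genuine difference is how Jensen's inequality is arranged, a subtlety you correctly flag. You apply it with the full probability measure $P(X,\cdot)\,dy$ on $\B$ to the zero-extension $u_1=u\chi_\Omega$, which directly produces $\int_\Omega P\,e^{\alpha u}\,dy + \int_{\Omega^c}P\,dy$. The paper instead renormalizes the kernel to the probability measure $P(X,\cdot)/g(x,t)$ on $\Omega$, where $g(x,t)=\int_\Omega P(X,y)\,dy$; this introduces a factor $g$ in the exponent and then uses $0<C_0\le g\le 1$ on $\Omega_0\times(0,R)$ together with $e^{\alpha g u}\le\max\{e^{\alpha u},1\}$. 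Your zero-extension device dispenses with $g$ and with the $\max$-comparison, which makes your write-up slightly cleaner; the paper's version avoids explicitly writing $\ou=\ou_1+\ou_2$, absorbing the far-field into an additive constant instead, but that distinction is cosmetic. Either way the crux is the same: extract $\|e^{\alpha u}\|_{L^1(\Omega)}$ via Jensen, then kill the kernel via $\int_{\B}P(X,y)\,dx=1$ and the integrability of $t^{1-2s}$ on $(0,T)$ for $s\in(0,1)$.
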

	\begin{proof}
		Let $\Omega_0\Subset\Omega$ be fixed.   Since $ u\in  L_s(\B)$, we have for $x\in\Omega_0$ and $t\in(0,R)$
		$$\ou(x,t)\leq C+\int_{\Omega}u(y)P(X,y)dy=C+\int_{\Omega}g(x,t)u(y)\frac{P(X,y)dy}{g(x,t)},$$
		where
		$C \leq g(x,t):=\int_{\Omega}P(X,y)dy\leq 1$ for some positive constant $C$ depending solely on $R$, $\Omega_0$ and $\Omega$. Therefore, by the Jensen's inequality
		\begin{align*}
		&\int_{\Omega_0}e^{\alpha \ou(x,t)}dx
		\leq C \int_{\Omega_0}\int_{\Omega}e^{\alpha g(x,t)u(y)}P(X,y)dydx\\
		&\leq C\int_{\Omega}\max\{e^{\alpha u(y)},1\}\int_{\Omega_0}P(X,y)dxdy
		\leq C+C\int_{\Omega}e^{\alpha u(y)}dy,
		\end{align*}
		where the constant $C$ depends on $R,~\Omega_0$ and $\Omega$, but not on $t$. Hence,
		\begin{align*}
		\int_{\Omega_0\times(0,R)} t^{1-2s}e^{\alpha \ou(x,t)}dxdt \leq
		\int_0^R t^{1-2s}\int_{\Omega_0}e^{\alpha \ou(x,t)}dxdt <\infty.
		\end{align*}
		This finishes the proof.
	\end{proof}

	The following lemma is the main conclusion of this appendix and it is essential in the proof of Proposition \ref{prop-2.6}.
	
	\begin{lemma}
		\label{lem-0.2}
		Let $u\in L_s(\B)\cap \dot H^s_{\mathrm{loc}}(\B)$ be a solution to \eqref{fg-1}. Assume that $u$ is stable in $\Omega\subseteq\B$. Let    $\Phi \in C_c^\infty({\overline\R})$ be of the form $\Phi (x,t) =\vp(x)\eta(t)$  for some $\vp\in C_c^\infty(\Omega)$  and $\eta \equiv 1$ on $[0,1]$.  Then for every $0<\alpha<2$  we have
		\begin{equation*}
		\begin{aligned}
		(2-\alpha)\kappa_s\int_{\B}|x|^a e^{(1+2\alpha) u}\vp^2dx
		&\leq 2\int_{\R}t^{1-2s}e^{2\alpha\bar u}|\nabla \Phi|^2dxdt\\
		&\quad  -\frac12\int_{\R}e^{2\alpha\bar u}\nabla\cdot[t^{1-2s}\nabla\Phi^2]dxdt . \end{aligned}
		\end{equation*}
	\end{lemma}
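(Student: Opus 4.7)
The plan is to test the weak formulation \eqref{1.weak} and the extended stability inequality \eqref{h.2} against two suitable functions built out of $\Phi$ and $\ou$, and then to perform some linear algebra and one integration by parts. To keep the bookkeeping transparent, set
\[
R:=\kappa_s\int_{\B}|x|^a e^{(1+2\alpha)u}\vp^2\,dx,\qquad A:=\int_{\R}t^{1-2s}e^{2\alpha\ou}\Phi^2|\nabla\ou|^2\,dxdt,
\]
\[
B:=\int_{\R}t^{1-2s}e^{2\alpha\ou}\nabla\ou\cdot\nabla\Phi^2\,dxdt,\qquad C:=\int_{\R}t^{1-2s}e^{2\alpha\ou}|\nabla\Phi|^2\,dxdt.
\]

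First, I would insert $e^{2\alpha\ou}\Phi^2$ into \eqref{1.weak}. Its boundary trace at $t=0$ equals $e^{2\alpha u}\vp^2$, and expanding $\nabla(e^{2\alpha\ou}\Phi^2)=2\alpha e^{2\alpha\ou}\Phi^2\nabla\ou+e^{2\alpha\ou}\nabla\Phi^2$ yields the identity $2\alpha A+B=R$. Second, I would plug $e^{\alpha\ou}\Phi$ into the extended stability inequality \eqref{h.2}; its trace is $e^{\alpha u}\vp$, and expanding the square using the handy rewriting $2\alpha\Phi\nabla\ou\cdot\nabla\Phi=\alpha\nabla\ou\cdot\nabla\Phi^2$ delivers $\alpha^2 A+\alpha B+C\geq R$.

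The algebraic core is now simple: doubling the stability inequality and substituting $2\alpha^2 A=\alpha(R-B)$ from the first identity gives
\[
2C+\alpha B\geq (2-\alpha)R.
\]
To cast $\alpha B$ in the form stated in the lemma, I integrate by parts using $\nabla e^{2\alpha\ou}=2\alpha e^{2\alpha\ou}\nabla\ou$:
\[
\int_{\R}e^{2\alpha\ou}\nabla\cdot[t^{1-2s}\nabla\Phi^2]\,dxdt=-2\alpha B,
\]
where the boundary contribution at $t=0$ vanishes since $\partial_t\Phi^2\big|_{t=0}=2\vp^2\eta(0)\eta'(0)=0$ by the hypothesis $\eta\equiv 1$ on $[0,1]$. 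Substituting $\alpha B=-\tfrac12\int e^{2\alpha\ou}\nabla\cdot[t^{1-2s}\nabla\Phi^2]\,dxdt$ into the preceding inequality produces exactly the claim.

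The main obstacle will be making the two test-function insertions rigorous, since neither $e^{2\alpha\ou}\Phi^2$ nor $e^{\alpha\ou}\Phi$ is a priori a smooth compactly supported admissible test function. The cure is a truncation/approximation scheme: replace $\ou$ by $\ou_N:=\min(\ou,N)$ (or by mollification), exploit Lemma \ref{lem-2.7} to guarantee $t^{1-2s}e^{\beta\ou}\in L^1_{\mathrm{loc}}$ for every $\beta\in(0,2+2\alpha]$, and use the hypothesis $|x|^ae^u\in L^2_{\mathrm{loc}}$ together with $u\in\dot H^s_{\mathrm{loc}}(\B)$ to dominate the three integrands $A$, $B$, $C$ uniformly in $N$. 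Monotone or dominated convergence then permits the passage $N\to\infty$. The algebraic step is engineered precisely so that the unsigned cross term $\alpha B$ survives the combination, because it is exactly this term which, after integration by parts, produces the weighted divergence quantity appearing on the right-hand side of the stated inequality.
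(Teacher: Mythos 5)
Your algebraic core is exactly the paper's: test the weak formulation with $e^{2\alpha\ou}\Phi^2$ to get $2\alpha A+B=R$, test the extended stability inequality with $e^{\alpha\ou}\Phi$ to get $\alpha^2 A+\alpha B+C\geq R$, eliminate $A$ by combining, and integrate $\alpha B$ by parts into $-\tfrac12\int e^{2\alpha\ou}\nabla\cdot[t^{1-2s}\nabla\Phi^2]$. The paper performs this same computation at the end of its proof of Lemma~\ref{lem-0.2}. However, the approximation step that you relegate to a remark is precisely where the substance of the proof lives, and your proposed cure does not work.

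The gap: the quantity $A=\int_{\R}t^{1-2s}e^{2\alpha\ou}\Phi^2|\nabla\ou|^2\,dxdt$ is \emph{not} a priori finite, and neither of the hypotheses you invoke controls it. The assumption $u\in\dot H^s_{\mathrm{loc}}(\B)$ gives $t^{1-2s}|\nabla\ou|^2\in L^1_{\mathrm{loc}}$ but says nothing about the exponentially weighted version $e^{2\alpha\ou}|\nabla\ou|^2$, and $|x|^ae^u\in L^2_{\mathrm{loc}}$ (which is not even a hypothesis of this lemma) controls a boundary quantity, not a bulk one. Moreover, Lemma~\ref{lem-2.7} presupposes $e^{\alpha u}\in L^1(\Omega)$; invoking it to get $t^{1-2s}e^{\beta\ou}\in L^1_{\mathrm{loc}}$ for $\beta$ up to $2+2\alpha$ is circular, since establishing this higher integrability of $e^u$ is the very content of the lemma you are proving. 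Because $A$ may be infinite, your clean algebra in which $A$ cancels identically cannot be taken at face value: after truncation $\ou_k=\min(\ou,k)$, the weak-formulation identity produces the cross term $B_k=\int t^{1-2s}e^{2\alpha\ou_k}\nabla\ou\cdot\nabla\Phi^2$ with $\nabla\ou$ rather than $\nabla\ou_k$, so this $B_k$ does \emph{not} integrate by parts to the divergence term, and it cannot simply be sent to a limit.

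The paper's proof handles this with a genuinely different mechanism: it cites \cite[Lemma 3.5]{hy} to estimate $B_k=o_k(1)A_k+O_k(1)$ under the auxiliary hypothesis $t^{1-2s}e^{2(\alpha+\ve)\ou}\in L^1_{\mathrm{loc}}$; it then deliberately \emph{loses} an $\ve$ in the constant, arriving at $(2-\alpha-\ve)R_k\leq(-2\alpha\ve+o_k(1))A_k+O_k(1)$, so that the coefficient of $A_k$ is eventually strictly negative. This yields a uniform bound on $A_k$, hence $A<\infty$, and a bound on $R_k$, hence $e^{(1+2\alpha)u}\in L^1_{\mathrm{loc}}$. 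Starting from $\alpha<\tfrac12$ (where the auxiliary integrability is available from $e^u\in L^1_{\mathrm{loc}}$) and bootstrapping gives the full range $\alpha<2$. Only after this is the passage $k\to\infty$ in (7) and (8) legitimate and the algebra you wrote down valid. You should incorporate the $\ve$-loss step and the bootstrap, and either prove or cite the cross-term estimate of \cite[Lemma 3.5]{hy}; without them the proof is incomplete.
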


	\begin{proof}
		For $k\in\mathbb{N}$ we set $\ou_k:=\min\{\ou,k\}$. Let $u_k$ be the restriction of $\ou_k$ on $\B$.  It is straightforward to see that $e^{2\alpha\ou_k}\Phi^2$ can be considered as a  test function in \eqref{1.weak}. Therefore,
		\begin{equation}
		\begin{aligned}
		\label{7}
		&\kappa_s\int_{\B}|x|^a e^u e^{2\alpha u_k}\vp^2dx\\
		&=2\alpha\int_{\R}t^{1-2s} e^{2\alpha\ou_k}\Phi^2\nabla\ou\cdot \nabla \ou_kdxdt +\int_{\R}t^{1-2s}e^{2\alpha\ou_k}\nabla\ou\cdot \nabla \Phi ^2dxdt\\
		&=2\alpha\int_{\R}t^{1-2s} e^{2\alpha\ou_k}\Phi^2\ | \nabla \ou_k|^2dxdt +\int_{\R}t^{1-2s}e^{2\alpha\ou_k}\nabla\ou\cdot \nabla \Phi^2dxdt.
		\end{aligned}
		\end{equation}
		Now,  we assume that $t^{1-2s}e^{2(\alpha+\ve)\ou}\in L^1_{\mathrm{loc}}(\Omega\times [0,\infty))$ for some $\ve>0$. Then by \cite[Lemma 3.5]{hy}, up to a subsequence, we have 
		$$\kappa_s\int_{\B}|x|^a e^u e^{2\alpha u_k}\vp^2dx=(2\alpha+o(1))\int_{\R}t^{1-2s} e^{2\alpha\ou_k}\Phi^2\ | \nabla \ou_k|^2dxdt+O(1).$$
		Taking $e^{\alpha\ou_k} \Phi$  as a test function in the stability inequality  \eqref{h.2} yields 
		\begin{equation}
		\label{8}
		\begin{aligned}
		&\kappa_s \int_{\B}|x|^a e^ue^{2\alpha u_k}\vp^2dx\\
		 &\leq \alpha^2 \int_{\R}t^{1-2s}\Phi^2e^{2\alpha\ou_k}|\nabla\ou_k|^2dxdt + \int_{\R}t^{1-2s}e^{2\alpha\ou_k}|\nabla  \Phi|^2dxdt\\
		  &\quad+\frac12 \int_{\R}t^{1-2s} \nabla e^{2\alpha \ou_k}\nabla\Phi ^2dxdt\\
		&=  \alpha^2 \int_{\R}t^{1-2s}\Phi^2e^{2\alpha \ou_k}|\nabla \ou_k|^2dxdt + \int_{\R}t^{1-2s}e^{2\alpha\ou_k}|\nabla \Phi|^2dxdt
		\\ &\quad-\frac12 \int_{\R}e^{2\alpha\ou_k} \nabla\cdot [t^{1-2s}\nabla \Phi^2]dxdt,
		\end{aligned}
		\end{equation}
		where the last equality follows from integration by parts. Notice that the boundary term vanishes as $\eta(t)=1$ on $[0,1]$. Combining the prior estimates, we obtain
		\begin{equation}
		\label{9}
		\begin{aligned}
		&(2-\alpha-\ve)\kappa_s\int_{\B}|x|^a e^ue^{2\alpha u_k}\vp^2dx
		\\&\leq(-2\alpha\ve+o(1))\int_{\R}t^{1-2s} e^{2\alpha\ou_k}\Phi^2|\nabla \ou_k|^2dxdt+O(1)
		\\&\quad+2\int_{\R}t^{1-2s}e^{2\alpha\ou_k}|\nabla \Phi|^2dx -\int_{\R}e^{2\alpha\ou_k}\nabla\cdot[t^{1-2s}\nabla \Phi^2]dxdt.
		\end{aligned}
		\end{equation}
		Concerning the last term in \eqref{9}, one can notice that 
		$$\nabla\cdot[t^{1-2s}\nabla \Phi^2]=t^{1-2s}\eta^2\D_x\vp^2+\vp^2\partial_t(t^{1-2s}\partial_t\eta^2).$$
		Again, as $\eta=1$ on $[0,1]$,  the second term   in the right-hand side of the  above expression is identically zero for $0\leq t\leq 1$. Therefore,  Lemma \ref{lem-2.7} yields 
		$$\left|\int_{\R}t^{1-2s}e^{2\alpha\ou_k}|\nabla \Phi|^2dxdt\right| +\left|\int_{\R}e^{2\alpha\ou_k}\nabla\cdot[t^{1-2s}\nabla \Phi^2]dxdt\right|\leq C.$$
		Thus,
		\begin{align*}
		(2-\alpha-\ve)\int_{\B}e^ue^{2\alpha u_k}\vp^2dx\leq C,
		\end{align*}
		provided $\int_{\Omega}e^{2(\alpha+\ve)u}dx<\infty.$ Now, choosing $\alpha\in(0,\frac12)$ and $0<\ve<\frac 12-\alpha$ in the above estimate and then taking $k\to\infty$ we conclude $e^{(1+2\alpha)u}\in L^1_{\mathrm{loc}}(\Omega)$. By an iteration argument we conclude that $e^{(1+2\alpha)u}\in L^1_{\mathrm{loc}}(\Omega)$ for every $\alpha\in(0,2)$.
		
Now, send $k\to\infty$ in \eqref{9} to  get 
		$$\int_{\R}t^{1-2s}e^{2\alpha\ou}\Phi^2|\nabla\ou|^2dxdt<\infty.$$ 
		And, take limit in \eqref{7} and \eqref{8} as $k\to\infty$. Then, the proof of lemma follows immediately since the second term on the right-hand side of \eqref{7}, when  $k\to\infty$, can be re-written as
		\begin{align*}
		\int_{\R}t^{1-2s}e^{2\alpha\ou}\nabla \ou\cdot\nabla\Phi^2dxdt
		&=\frac{1}{2\alpha} \int_{\R}t^{1-2s}\nabla e^{2\alpha\ou}\cdot\nabla \Phi^2dxdt\\
		&=-\frac{1}{2\alpha} \int_{\R} e^{2\alpha\ou}\nabla\cdot[t^{1-2s}\nabla \Phi^2]dxdt.
		\end{align*} Note that the boundary integral is zero as $\eta=1$ on $[0,1]$. This completes the proof. 
	\end{proof}

\section*{Acknowledgement}

	 The research of the third author is partially supported by NSFC No.11801550 and NSFC No.11871470. The third author thanks Ali Hyder for many stimulating discussions.

\end{document}